\title{Connecting $3$-manifold triangulations with unimodal sequences of elementary moves}
\date{\today}
\author{
 Benjamin A. Burton\\
 The University of Queensland\\
 \email{bab}{maths.uq.edu.au}
 \and
 Alexander He\\
 Oklahoma State University\\
 \email{alex.he}{okstate.edu}
}
\setlist[itemize]{leftmargin=*, noitemsep}
\setlist[enumerate]{leftmargin=*, noitemsep}
\setlist[description]{leftmargin=*, labelwidth=*, noitemsep}
\theoremstyle{plain}
\newtheorem{theorem}{Theorem}
\newtheorem{lemma}[theorem]{Lemma}
\newtheorem{observation}[theorem]{Observation}
\newtheorem{conjecture}[theorem]{Conjecture}
\theoremstyle{definition}
\newtheorem{remark}[theorem]{Remark}
\newtheorem{notation}[theorem]{Notation}
\declaretheoremstyle[
	headfont=\normalfont\bfseries,
	numbered=yes,
	bodyfont=\normalfont,
	qed={$\blacksquare$},
	spaceabove=1em,
	spacebelow=1em,
]{qeddef}
\declaretheorem[
	style=qeddef,
	title=Definition,
	sibling=theorem,
]{definition}
\renewenvironment{proof}[1][\proofname] {\par\pushQED{\qed}\normalfont\topsep6\p@\@plus6\p@\relax\trivlist\item[\hskip\labelsep\itshape\bfseries#1\@addpunct{.}]\ignorespaces}{\popQED\endtrivlist\@endpefalse}
\newcommand{\email}[2]{\href{mailto:#1@#2}{\textsf{#1\hspace{1pt}$@$\hspace{1pt}#2}}}
\begin{document}

\maketitle

\begin{abstract}
A key result in computational $3$-manifold topology is that any two triangulations of the same $3$-manifold are
connected by a finite sequence of bistellar flips, also known as Pachner moves.
One limitation of this result is that little is known about the structure of this sequence---knowing
more about the structure could help both proofs and algorithms.
Motivated by this, we consider sequences of moves that
are ``unimodal'' in the sense that they break up into two parts:
first, a sequence that monotonically increases the size of the triangulation;
and second, a sequence that monotonically decreases the size.
We prove that any two one-vertex triangulations of the same $3$-manifold, each with at least two tetrahedra,
are connected by a unimodal sequence of 2-3 and 2-0 moves.
We also study the practical utility of unimodal sequences;
specifically, we implement an algorithm to find such sequences, and use this algorithm to perform some detailed computational experiments.
\end{abstract}
\paragraph{Keywords}Computational topology, $3$-manifolds, triangulations, special spines,
elementary moves, bistellar flips, Pachner moves

\paragraph{\textbf{\textup{Acknowledgements}}}
AH was supported by an Australian Government Research Training Program Scholarship.
We thank the anonymous reviewers for their helpful feedback, especially for suggesting the name ``unimodal'', which is much better than our previous terminology.

\section{Introduction}\label{sec:intro}

Given two triangulated $3$-manifolds, the \emph{homeomorphism problem} asks:
are these two $3$-manifolds homeomorphic?
This is one of the oldest and most important problems in computational low-dimensional topology.
Although we now know that there is an algorithm for this problem~\cite{Kuperberg2019, ScottShort2014}, the algorithm is far too complicated
to be implemented, and the best known upper bound on the running time is a tower of exponentials~\cite{Kuperberg2019}.
Nevertheless, we have a variety of techniques that, in practice, often allow us to solve the homeomorphism problem with real data.
\begin{itemize}
\item If the answer is ``no'' (that is, the $3$-manifolds are \emph{not} homeomorphic), then we can often distinguish
the given $3$-manifolds using efficiently computable invariants, such as homology groups
or the Turaev-Viro invariants~\cite{BMS2018,TuraevViro1992}.
\item If the answer is ``yes'' (that is, the $3$-manifolds \emph{are} homeomorphic), then we have a range of tools
(some of which we mention shortly) that we can use to verify that the given $3$-manifolds are homeomorphic.
\end{itemize}
One important application of these techniques is in building censuses of $3$-manifolds:
when we generate all possible triangulations up to a certain number of tetrahedra,
many $3$-manifolds may have more than one representative among these triangulations,
so we need to be able to identify (and hence eliminate) such duplicates.

As we discuss in Section~\ref{subsec:introMoves}, elementary moves---which involve modifying a small piece of a triangulation without
changing the underlying $3$-manifold---have a significant role to play in both the ``yes'' and ``no'' directions of the homeomorphism problem.
More broadly, elementary moves are often crucial for practical computation with $3$-manifolds,
simply because they provide an easy and often surprisingly effective way to ``improve'' triangulations;
for instance, since the complexity of many $3$-manifold algorithms scales exponentially (or worse) with the number of tetrahedra
in the input triangulation, using elementary moves to reduce the number of tetrahedra is often
critical for obtaining computational results within reasonable time and memory constraints.

Motivated by all these applications, our goal in this paper is, roughly speaking, to gain a finer theoretical understanding of elementary moves.
Before we discuss this in more detail, we first review the elementary moves that we will use.

\subsection{Elementary moves on triangulations}\label{subsec:introMoves}

The most well-known elementary moves are the \emph{Pachner moves} (also often called \emph{bistellar flips}).
For $3$-dimensional triangulations, the four Pachner moves---illustrated in Figures~\ref{fig:pachnerMove} and~\ref{fig:1-4Move}---are:
\begin{itemize}
\item the \textbf{2-3 move}, which replaces two distinct tetrahedra attached along a triangular face
with three distinct tetrahedra attached around an edge;
\item the \textbf{3-2 move}, which is the inverse of the 2-3 move;
\item the \textbf{1-4 move}, which subdivides a single tetrahedron into
four distinct tetrahedra attached around a vertex; and
\item the \textbf{4-1 move}, which is the inverse of the 1-4 move.
\end{itemize}
Pachner showed that any two triangulations of the same $3$-manifold are
connected by a finite sequence of these moves~\cite{Pachner1991}.

\begin{figure}[htbp]
\centering
	\begin{tikzpicture}
	\node[inner sep=0pt] (Before) at (0,0)
		{\includegraphics[scale=0.7]{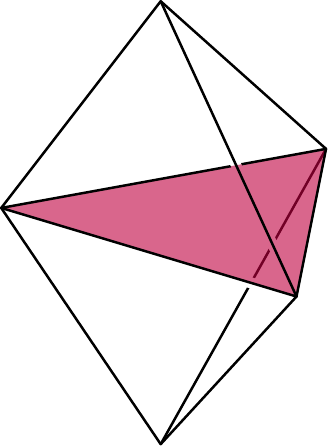}};
	\node[inner sep=0pt] (After) at (5.5,0)
		{\includegraphics[scale=0.7]{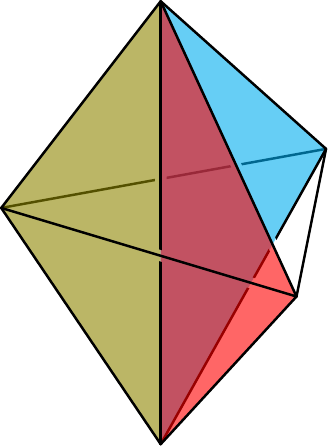}};

	\begin{scope}[very thick, line cap=round, -{Stealth}]
	\draw ($(Before.east)+(0,0.3)$)
		-- ($(After.west)+(0,0.3)$)
		node[midway, above, inner sep=3pt] {2-3};
	\draw ($(After.west)+(0,-0.3)$)
		-- ($(Before.east)+(0,-0.3)$)
		node[midway, below, inner sep=3pt] {3-2};
	\end{scope}
	\end{tikzpicture}
\caption{The 2-3 and 3-2 moves.}
\label{fig:pachnerMove}
\end{figure}

\begin{figure}[htbp]
\centering
	\begin{tikzpicture}
	\node[inner sep=0pt] (Before) at (0,0)
		{\includegraphics[scale=0.45]{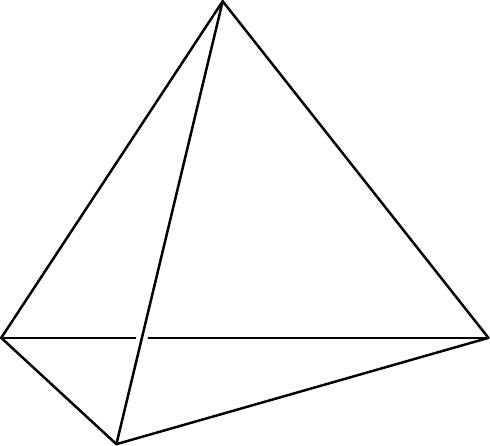}};
	\node[inner sep=0pt] (After) at (5.5,0)
		{\includegraphics[scale=0.45]{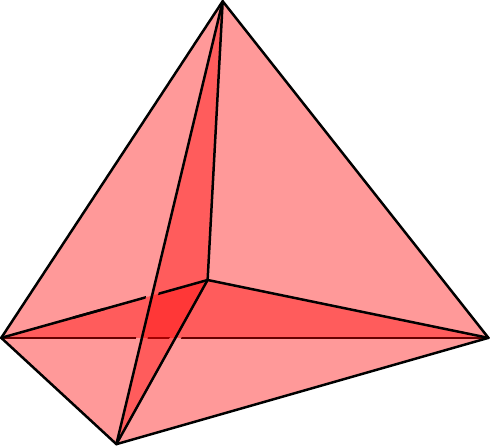}};

	\begin{scope}[very thick, line cap=round, -{Stealth}]
	\draw ($(Before.east)+(0,0.3)$)
		-- ($(After.west)+(0,0.3)$)
		node[midway, above, inner sep=3pt] {1-4};
	\draw ($(After.west)+(0,-0.3)$)
		-- ($(Before.east)+(0,-0.3)$)
		node[midway, below, inner sep=3pt] {4-1};
	\end{scope}
	\end{tikzpicture}
\caption{The 1-4 and 4-1 moves.}
\label{fig:1-4Move}
\end{figure}

Notice that the 2-3 and 3-2 moves do not change the number of vertices,
and also that these moves can only be performed on a triangulation with at least two tetrahedra.
In fact, if we restrict our attention to one-vertex triangulations with at least two tetrahedra,
then only 2-3 and 3-2 moves are necessary to connect any two such triangulations of the same $3$-manifold;
this was proven independently by Matveev~\cite[p.~29]{Matveev2007}
and Piergallini~\cite{Piergallini1988}.
Moreover, as we discuss in Section~\ref{subsec:moves}, the Matveev-Piergallini theorem extends
to more general settings than just one-vertex triangulations of compact
$3$-manifolds~\cite{BenedettiPetronio1995,BenedettiPetronio1997,Amendola2005}.

With this in mind, recall that we often rely on invariants to give a ``no'' answer to the homeomorphism problem.
The Matveev-Piergallini theorem provides a natural way to prove that a particular object associated to
a triangulation is actually an invariant: prove that this object is preserved by 2-3 moves.
Indeed, a number of $3$-manifold invariants have been established using 2-3 moves and/or similar moves on triangulations;
for example, see~\cite{GaroufalidisKashaev2019,TuraevViro1992}.

On the other hand, one way to give a ``yes'' answer to the homeomorphism problem is
to find a sequence of elementary moves that connects the two input triangulations.
For well-structured $3$-manifolds such as Seifert fibre spaces or cusped hyperbolic manifolds,
this is not the most efficient method, because we have access to specialised
recognition algorithms~\cite{Matveev2007,MatveevRecognizer,Weeks1993}.
However, for arbitrary $3$-manifolds, performing a computationally expensive search for
a sequence of elementary moves is currently the best technique available.

Beyond Pachner moves, it is often useful to include other elementary moves in our toolbox.
In particular, the following moves are quite widely-used:
\begin{itemize}
\item the \textbf{0-2 move}, which ``fattens up'' a pair of adjacent triangles into
a pair of tetrahedra attached around an edge; and
\item the \textbf{2-0 move}, which is the inverse of the 2-0 move.
\end{itemize}
These moves are illustrated in Figure~\ref{fig:0-2Move}.
The 0-2 and 2-0 moves can be found under various names in other sources~\cite{Amendola2005, Matveev1987, Matveev2007, Piergallini1988, RST2019}.
The 0-2 move always preserves the underlying $3$-manifold.
The same is true for the 2-0 move provided some simple conditions are satsified;
we discuss this in more detail at the end of Section~\ref{subsec:moves}.

\begin{figure}[htbp]
\centering
	\begin{tikzpicture}
	\node[inner sep=0pt] (Before) at (0,0)
		{\includegraphics[scale=0.7]{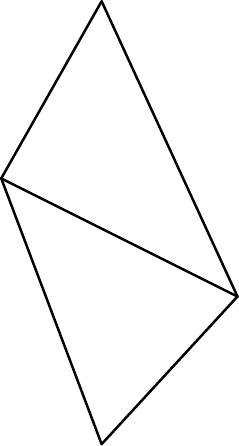}};
	\node[inner sep=0pt] (After) at (5,0)
		{\includegraphics[scale=0.7]{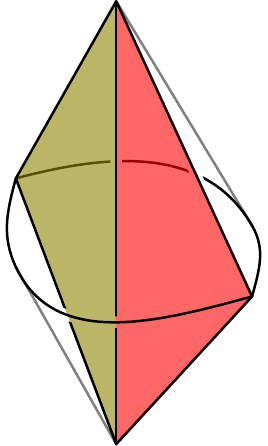}};

	\begin{scope}[very thick, line cap=round, -{Stealth}]
	\draw ($(Before.east)+(0,0.3)$)
		-- ($(After.west)+(0,0.3)$)
		node[midway, above, inner sep=3pt] {0-2};
	\draw ($(After.west)+(0,-0.3)$)
		-- ($(Before.east)+(0,-0.3)$)
		node[midway, below, inner sep=3pt] {2-0};
	\end{scope}
	\end{tikzpicture}
\caption{The 0-2 and 2-0 moves.}
\label{fig:0-2Move}
\end{figure}

\subsection{Unimodal sequences}\label{subsec:introMonotonic}

As we saw above, elementary moves---in particular, connectivity results
like the Matveev-Piergallini theorem---play an important role in both
theoretical and practical applications in computational $3$-manifold topology.
However, a drawback of the Matveev-Piergallini theorem is that it only says that \emph{some} sequence
of 2-3 and 3-2 moves exists, and says nothing about what this sequence actually looks like.
From a theoretical perspective, knowing more about the structure of the sequence could simplify proofs;
from a practical perspective, we could potentially exploit any extra structure to perform a targeted search
for a sequence of elementary moves, instead of relying on a brute-force search.

In this paper, we study sequences of moves that are \textbf{unimodal}
in the sense that they break up into two parts:
\begin{itemize}
\item first, a \textbf{(monotonic) ascent} which consists only of moves that increase the number of tetrahedra
(such as 2-3 or 0-2 moves); and
\item second, a \textbf{(monotonic) descent} which consists only of moves that decrease the number of tetrahedra
(such as 3-2 or 2-0 moves).
\end{itemize}
A very natural question to ask is whether two one-vertex triangulations of the same $3$-manifold are always connected by a unimodal sequence of
2-3 and 3-2 moves (that is, a sequence consisting of an ascent via 2-3 moves followed by a descent via 3-2 moves);
later on, we give a more general and more precise statement in Conjecture~\ref{conj:monotonic}.
In Section~\ref{sec:practical}, we give some experimental evidence that Conjecture~\ref{conj:monotonic} may be true.

Our main result in this paper (Theorem~\ref{thm:semiMono}) is that if we restrict our attention to one-vertex triangulations with at least two tetrahedra,
then any two such triangulations of the same $3$-manifold are connected by a unimodal sequence of 2-3 and 2-0 moves;
that is, the descent uses 2-0 moves, rather than 3-2 moves.
Alternatively, we could follow the sequence backwards to obtain a unimodal sequence of 0-2 and 3-2 moves.
(Actually, just as the Matveev-Piergallini theorem extends to more general settings,
our main result also extends to these more general settings.)
Most of Section~\ref{sec:semiMono} is devoted to proving Theorem~\ref{thm:semiMono};
we also briefly discuss how our proof technique could potentially extend to a proof of Conjecture~\ref{conj:monotonic}.

As mentioned earlier, we study the characteristics of unimodal sequences experimentally in Section~\ref{sec:practical}.
Specifically, we implemented algorithms to search for both unstructured and unimodal
sequences that connect one-vertex triangulations of the same $3$-manifold;
we tested these algorithms on 2984 distinct $3$-manifolds, which produced over 200~million intermediate triangulations.
The results suggest that, despite its poorer theoretical bounds, a blind search for an unstructured sequence gives better practical
performance due to requiring fewer additional tetrahedra in intermediate triangulations.

We finish this introduction by mentioning a couple of related results that have previously appeared in the literature:
\begin{itemize}
\item There has been some previous attention given to unimodal sequences, though using different terminology.
Specifically, in a 1999 paper~\cite{Makovetskii1999}, Makovetskii studied unimodal sequences involving 2-3, 0-2, 3-2 and 2-0 moves
(where the moves could occur in any order, as long as all the 2-3 and 0-2 moves occur before all the 3-2 and 2-0 moves).
Our proof of Theorem~\ref{thm:semiMono} was conducted independently, before we became aware of Makovetskii's work.
Makovetskii's paper is rather terse and contains some omissions, but a high-level comparison indicates
that there are some similarities between our proof strategy and that of Makovetskii.
As we discuss in Section~\ref{subsec:arch}, our proof is heavily influenced by work of Matveev~\cite{Matveev2007};
this common influence is the source of at least some of the similarity with Makovetskii's work.
In any case, we were able to contribute enough new insight to obtain a substantially stronger result.
\item A similar philosophy has previously been applied to Reidemeister moves for links in the $3$-sphere.
Specifically, Coward showed~\cite{Coward2006} that any two diagrams of a link can be
related by a sequence of Reidemeister moves sorted in the following order:
	\begin{enumerate}[label={(\arabic*)}]
	\item Reidemeister I moves that increase the number of crossings.
	\item Reidemeister II moves that increase the number of crossings.
	\item Reidemeister III moves (which preserve the number of crossings).
	\item Reidemeister II moves that decrease the number of crossings.
	\end{enumerate}
\end{itemize}

\section{Preliminaries}\label{sec:prelims}
\subsection{Triangulations and special spines}\label{subsec:triAndSpines}
Triangulations and special spines give two dual ways to represent a $3$-manifold.
Here, we define these dual notions, and describe how to convert between them.
Much of our discussion is based on that in \cite{Matveev2007} and \cite{RST2019}, though we omit the details of some proofs.

A \textbf{(generalised) triangulation} $\mathcal{T}$ is a collection of $n$
tetrahedra, such that each of the $4n$ triangular faces is affinely
identified with one of the other triangular faces; intuitively, the $n$
tetrahedra have been ``glued together'' along their faces to form a
$3$-dimensional complex.
Each pair of identified faces is referred to as a single \textbf{face}
of the triangulation. Since the face identifications cause many
tetrahedron edges to be identified and many tetrahedron vertices to be
identified, we also define an \textbf{edge} of the triangulation to be
an equivalence class of identified tetrahedron edges, and a
\textbf{vertex} of the triangulation to be an equivalence class of
identified tetrahedron vertices.
Here we allow multiple edges of the same tetrahedron to be identified, and we allow multiple vertices of the same tetrahedron to be identified.

We call $\mathcal{T}$ a \textbf{3-manifold triangulation} if its underlying topological space is a (compact) $3$-manifold.
In the definition we gave above, we did not allow any faces of tetrahedra to be left unglued;
thus, for the purposes of this paper, a $3$-manifold triangulation must represent a \textbf{closed} $3$-manifold (i.e., a compact $3$-manifold without boundary).
Also, note that a generalised triangulation $\mathcal{T}$ need not be a $3$-manifold triangulation,
because it may contain ``singularities'' at midpoints of edges and at vertices.
Depending on the face identifications that make up $\mathcal{T}$, there are two possibilities for an edge $e$.
\begin{itemize}
\item If $e$ is identified with itself in reverse, then every small
neighbourhood of the midpoint of $e$ will be bounded by a projective
plane. In this case, the midpoint of $e$ is a singularity, and
$\mathcal{T}$ is not a $3$-manifold triangulation.
\item Otherwise, every interior point of $e$ will have a small neighbourhood
bounded by a sphere, in which case the edge does not produce a singularity.
\end{itemize}
Throughout this paper, we will tacitly assume that no edge is identified with itself in reverse, so that singularities only occur at the vertices of a triangulation.

To see what can happen at a vertex $v$, imagine introducing a small
triangle near each tetrahedron vertex that is incident with $v$. As
illustrated in Figure \ref{fig:vertexLink}, we can glue all these
triangles together to form a closed surface called the \textbf{link} of
$v$. If the link is a $2$-sphere, then $v$ is not a singularity, and we call
$v$ an \textbf{internal vertex}; otherwise, we call $v$ an \textbf{ideal
vertex}. A triangulation that satisfies the edge condition above
is a $3$-manifold triangulation if and only if every vertex is internal.

\begin{figure}[htbp]
\centering
\includegraphics[scale=1]{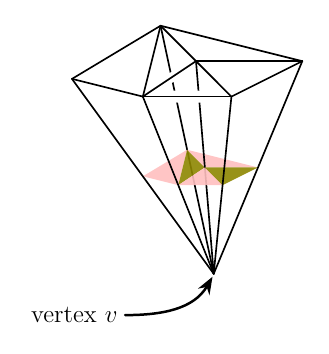}
\caption{A fragment of the link of $v$, built from triangles.}
\label{fig:vertexLink}
\end{figure}

The motivation for working with these generalised $3$-manifold triangulations is that they tend
to be smaller, and hence better suited to computation, than simplicial complexes.
In particular, every $3$-manifold admits a \textbf{one-vertex triangulation}:
an $n$-tetrahedron triangulation in which all $4n$ tetrahedron vertices are identified to become a single vertex.

Generalised triangulations also give us the flexibility to work with \textbf{ideal triangulations}: triangulations that have one or more ideal vertices.
Although the underlying topological space of an ideal triangulation $\mathcal{T}$ is not a $3$-manifold, we can recover a \textbf{bounded} $3$-manifold (i.e., a compact $3$-manifold with non-empty boundary) $\mathcal{M}$ by deleting a small open regular neighbourhood of the ideal vertices of $\mathcal{T}$;
we say that $\mathcal{T}$ is an \textbf{ideal triangulation of} $\mathcal{M}$, and that $\mathcal{M}$ is obtained from $\mathcal{T}$ by \textbf{truncating} the ideal vertices.
(Alternatively, it is also common to recover a \emph{non-compact} $3$-manifold by deleting, rather than truncating, the ideal vertices;
we do not use this idea in this paper.)
The idea for such ideal triangulations originated with Thurston's famous two-tetrahedron ideal triangulation of
the figure-eight knot complement (see~\cite{Thurston1978} or~\cite[Example~1.4.8]{Thurston1997}).
Ideal triangulations are often useful for computation because they typically require
fewer tetrahedra than non-ideal triangulations of the same bounded $3$-manifold
(such non-ideal triangulations can be obtained by allowing some faces of some tetrahedra to be left unglued).

\begin{remark}\label{rmk:noS2Bdry}
Since we do not truncate the internal vertices, the idea just described never gives a $3$-manifold with $2$-sphere boundary components.
For this reason, we will often find it convenient to restrict our attention to $3$-manifolds with no $2$-sphere boundary components.
\end{remark}

As mentioned earlier, we can also represent
$3$-manifolds using the dual notion of a special spine, which is
essentially a $2$-dimensional complex that captures all the topological
information in a $3$-dimensional manifold.
Before defining special spines precisely, it helps to get an intuitive feel for these objects by
reviewing the procedure for turning a triangulation into its dual special spine.

We start by defining a $2$-dimensional configuration called a \textbf{butterfly} that dualises the combinatorics of a tetrahedron.
Specifically, a butterfly consists of four arcs that meet at a central vertex, with a $2$-dimensional ``wing'' attached to
each of the six possible pairs of arcs, as shown in Figure~\ref{subfig:tetButterfly};
the four arcs are dual to the four triangular faces of a tetrahedron, and the six wings are dual to the six edges of a tetrahedron.
To turn a triangulation into its dual spine, replace each tetrahedron with its dual butterfly;
as illustrated in Figure~\ref{subfig:adjButterflies}, the face-gluings of the triangulation induce a natural way to attach all the dual butterflies together to form the dual spine.

\begin{figure}[htbp]
\centering
	\begin{subfigure}[t]{0.375\textwidth}
	\centering
		\includegraphics[scale=0.7]{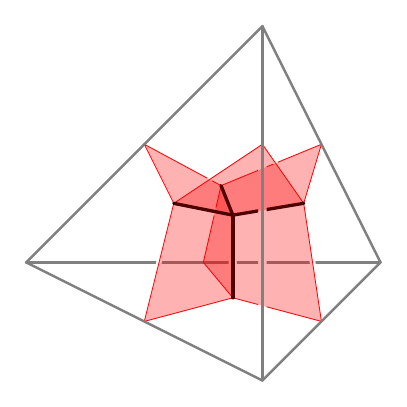}
	\caption{The dual butterfly inside a tetrahedron.}
	\label{subfig:tetButterfly}
	\end{subfigure}
	\hfill
	\begin{subfigure}[t]{0.6\textwidth}
	\centering
		\includegraphics[scale=0.7]{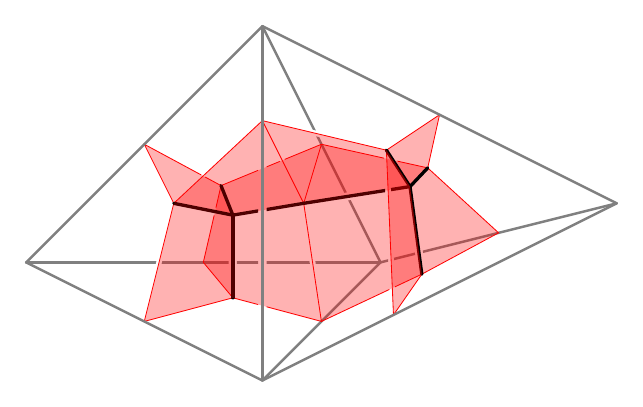}
	\caption{The dual butterflies inside a pair of adjacent tetrahedra.}
	\label{subfig:adjButterflies}
	\end{subfigure}
\caption{Converting a triangulation into its dual spine.}
\label{fig:dualSpine}
\end{figure}

It is not difficult to see that the dual spines constructed in this way are always a type of $2$-dimensional complex
called a \emph{special polyhedron}, which we characterise via the following two definitions:

\begin{definition}\label{def:simplePoly}
A \textbf{simple polyhedron} is a finite CW-complex $P$ such that each point $x$ in $P$ is of one of the following three types:
	\begin{description}[font=\bfseries\itshape]
	\item[Non-singular point.]
	Such a point $x$ has a neighbourhood in $P$ homeomorphic to a disc, as shown in Figure~\ref{subfig:nonSingular}.
	\item[Triple point.]
	Such a point $x$ has a neighbourhood in $P$ homeomorphic to three half-discs branching out from
	a common diameter passing through $x$, as shown in Figure~\ref{subfig:triplePoint}.
	\item[Vertex.]
	Such a point $x$ has a neighbourhood homeomorphic to a butterfly, with $x$ lying at
	the central vertex of this butterfly, as shown in Figure~\ref{subfig:vertexPoint}.
	\end{description}
The triple points and vertices are collectively called \textbf{singular points} of $P$;
the union of all the singular points, denoted $S_P$, is called the \textbf{singular graph} of $P$.
We write $P^{(0)}$ for the union of all vertices, $P^{(1)}$ for the union of
all triple points, and $P^{(2)}$ for the union of all non-singular points.
We refer to the components of $P^{(1)}$ as \textbf{edges}, and the components of $P^{(2)}$ as \textbf{faces}.
\end{definition}

\begin{figure}[htbp]
\centering
	\begin{subfigure}[b]{0.3\textwidth}
	\centering
	\includegraphics[scale=0.65]{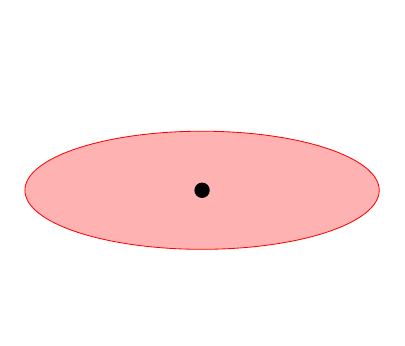}
	\caption{Non-singular point.}
	\label{subfig:nonSingular}
	\end{subfigure}
\hfill
	\begin{subfigure}[b]{0.3\textwidth}
	\centering
	\includegraphics[scale=0.65]{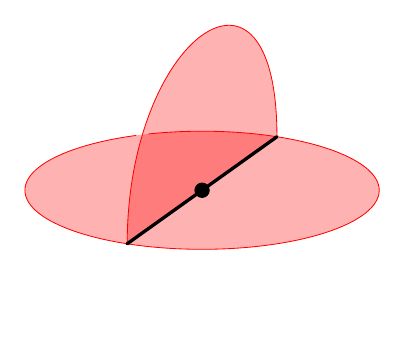}
	\caption{Triple point.}
	\label{subfig:triplePoint}
	\end{subfigure}
\hfill
	\begin{subfigure}[b]{0.3\textwidth}
	\centering
	\includegraphics[scale=0.65]{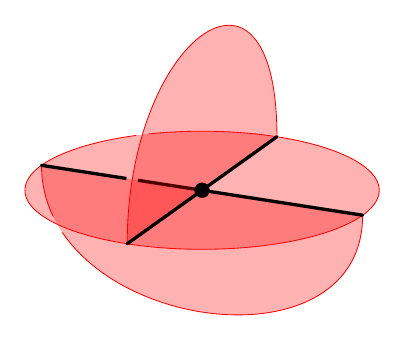}
	\caption{Vertex.}
	\label{subfig:vertexPoint}
	\end{subfigure}
\caption{The three types of points allowed in a simple polyhedron.}
\label{fig:typesOfPoints}
\end{figure}

\begin{definition}\label{def:specialPoly}
A \textbf{special polyhedron} is a simple polyhedron $P$ such that:
	\begin{itemize}[nosep]
	\item $P$ has at least one vertex (in other words, $P^{(0)}$ is non-empty);
	\item $P$ has at least one edge, and these edges are all $1$-cells
	(in other words, $P^{(1)}$ is non-empty and does not contain any closed loops); and
	\item the faces of $P$ are all discs.
	\qedhere
	\end{itemize}
\end{definition}

As we hinted earlier, a special \emph{spine} of a $3$-manifold $\mathcal{M}$ is a special polyhedron that is embedded
in $\mathcal{M}$ in such a way that it captures all the topological information in $\mathcal{M}$.
The following is a precise characterisation of this idea:

\begin{definition}\label{def:specialSpine}
Let $\mathcal{M}$ be a compact $3$-manifold, and let $Z$ denote a disjoint union of finitely many small open balls
inside $\mathcal{M}$ such that $\mathcal{M}-Z$ is a bounded $3$-manifold;
thus, $Z$ may be empty if $\mathcal{M}$ is bounded, but if $\mathcal{M}$ is closed then $Z$ must include at least one ball.
A special polyhedron $P$ is a \textbf{special spine} for $\mathcal{M}$ if, for some suitable choice of $Z$,
this polyhedron $P$ is tamely embedded in the interior of $\mathcal{M}-Z$ so that
a small closed regular neighbourhood of $P$ is homeomorphic to $\mathcal{M}-Z$ itself.
\end{definition}

Consider a compact $3$-manifold $\mathcal{M}$ with no $2$-sphere boundary components (recall Remark~\ref{rmk:noS2Bdry}).
If $\mathcal{M}$ is closed, let $\mathcal{T}$ be a corresponding $3$-manifold triangulation;
otherwise, if $\mathcal{M}$ is bounded, let $\mathcal{T}$ be an ideal triangulation of $\mathcal{M}$.
Following the construction described earlier, take $P$ to be the dual spine of $\mathcal{T}$.
To see that $P$ is indeed a special spine for $\mathcal{M}$, observe that taking a small open regular neighbourhood of
the internal vertices of $\mathcal{T}$ gives precisely the required choice $Z$ of open balls in $\mathcal{M}$.
Going in the other direction, a special spine $P$ also determines a unique corresponding
dual triangulation (see~\cite[pp.~10--13]{Matveev2007} for a proof of this);
this justifies the use of the term ``dual'' to describe this relationship between triangulations and special spines.

\subsection{Moves on triangulations and special spines}\label{subsec:moves}

As mentioned in Section~\ref{subsec:introMoves}, the 2-3 and 3-2 moves are local moves that modify a triangulation without changing its topology.
Recall also that, modulo some mild necessary conditions, two homeomorphic triangulations are always connected by a sequence of 2-3 and 3-2 moves.
This was first proven in the special case of one-vertex triangulations of closed $3$-manifolds:

\begin{theorem}[Matveev~\cite{Matveev2007} and Piergallini~\cite{Piergallini1988}, independently]\label{thm:MatveevPiergallini}
Let $\mathcal{M}$ be a closed $3$-manifold.
Let $\mathcal{T}$ and $\mathcal{U}$ be two one-vertex triangulations of $\mathcal{M}$, each with at least two tetrahedra.
Then $\mathcal{T}$ is connected to $\mathcal{U}$ by a finite sequence of 2-3 and 3-2 moves.
\end{theorem}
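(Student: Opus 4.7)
My plan is to recast the statement in the dual language of special spines developed in Section~\ref{subsec:triAndSpines}. A one-vertex triangulation of a closed 3-manifold $\mathcal{M}$ with $n \geq 2$ tetrahedra corresponds bijectively to a special spine of $\mathcal{M}$ with $n$ vertices; under this duality, the 2-3 and 3-2 Pachner moves correspond to a single local move on special spines and its inverse---the \emph{T-move}, which swaps a two-vertex configuration on a triple edge for a three-vertex configuration around a disc, mirroring the tetrahedron pictures in Figure~\ref{fig:pachnerMove}. So the theorem reduces to showing that any two special spines of $\mathcal{M}$, each with at least two vertices, are connected by a finite sequence of T-moves.

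To prove this dual statement, I would first embed the problem in the strictly larger class of \emph{simple spines}, where the 1-dimensional and 2-dimensional strata may be more complicated than in a special spine. The key preliminary is a Whitehead/Casler-style fact: any two simple spines of the same compact 3-manifold are related by a finite sequence of elementary polyhedral collapses and expansions through simple polyhedra. Starting from such a sequence between the spines dual to $\mathcal{T}$ and $\mathcal{U}$, I would track the local change at each collapse or expansion and reorganise it as a sequence of local moves on special spines. Matveev's analysis shows that only a short list of moves is needed: the T-move itself, the \emph{lune (1-2) move} (which creates or removes a bigonal 2-cell bounded by two edges meeting at two vertices), and possibly a bubble move that stabilises by an embedded 2-sphere.

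The heart of the proof---and where I expect the main obstacle to lie---is then showing that the non-T-moves can be eliminated in favour of T-moves. Bubble stabilisations are ruled out easily because $P_\mathcal{T}$ and $P_\mathcal{U}$ are both honest special spines of $\mathcal{M}$, but the lune move is delicate: it changes vertex count in a way a single T-move cannot, so it must be simulated by an explicit local sequence of T-moves. This is exactly where the ``at least two tetrahedra'' hypothesis enters: with a second vertex available nearby, one can perform a short choreographed sequence of T-moves that temporarily introduces auxiliary structure next to the lune, executes the topological effect of the lune move within the special-spine category, and then contracts the auxiliary structure away. Verifying that such a local substitution is always possible, regardless of how the triple edges and 2-cells are glued around the lune, is the main technical burden, and is the substance of Matveev's and Piergallini's independent arguments.

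Concatenating these local substitutions converts the Whitehead sequence connecting the two spines into a sequence consisting entirely of T-moves. Translating back through the triangulation--spine duality yields a finite sequence of 2-3 and 3-2 moves carrying $\mathcal{T}$ to $\mathcal{U}$, which is exactly what the theorem claims.
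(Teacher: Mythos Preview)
The paper does not give a complete proof of this theorem; it cites Matveev and Piergallini and then, in Section~\ref{subsec:arch}, sketches Matveev's argument because the same technique is reused for Theorem~\ref{thm:semiMono}. Your proposal is aligned with that sketch: both pass to the dual language of special spines, both reduce the problem to eliminating the 0-2/2-0 (lune) moves from a sequence that already connects the two spines, and both identify this elimination as the crux where the ``at least two vertices'' hypothesis is used.

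The chief difference is specificity at the crucial step. The paper's outline pinpoints the obstruction precisely: 0-2 moves can \emph{always} be rewritten as 2-3/3-2 moves, so the only problematic moves are 2-0 moves whose result fails to be special, and for these Matveev substitutes a single 2-3 move that mimics the 2-0 move at the cost of leaving behind an \emph{arch}---a small auxiliary piece that can be carried along and removed at the end (Figures~\ref{fig:replace2-0withArch} and~\ref{fig:arch0-2}, Observation~\ref{obs:removeArch}). Your ``choreographed sequence of T-moves that temporarily introduces auxiliary structure next to the lune'' is, in spirit, exactly this arch, but you have not isolated why only the 2-0 direction is troublesome, nor named or described the auxiliary structure concretely. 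Your Whitehead/Casler preliminary and the mention of bubble moves are plausible ingredients of the full argument that the paper's sketch simply black-boxes into the two bulleted ``non-trivial facts'' at the end of Section~\ref{subsec:arch}.
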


This was extended to triangulations of closed $3$-manifolds with any number of vertices by Benedetti and Petronio~\cite{BenedettiPetronio1995,BenedettiPetronio1997},
and subsequently also extended to ideal triangulations of bounded $3$-manifolds by Amendola~\cite{Amendola2005}.
To state the most general version of Theorem~\ref{thm:MatveevPiergallini}, and also to state our main result in this paper later on, we use the following notation:

\begin{notation}\label{notn:monotonic}
In Theorem~\ref{thm:Amendola}, Conjecture~\ref{conj:monotonic} and Theorem~\ref{thm:semiMono},
let $\mathcal{M}$ denote a compact $3$-manifold with no $2$-sphere boundary components (recall Remark~\ref{rmk:noS2Bdry}).
If $\mathcal{M}$ is closed, let $\mathcal{T}$ and $\mathcal{U}$ denote $3$-manifold triangulations of $\mathcal{M}$;
otherwise, if $\mathcal{M}$ is bounded, let $\mathcal{T}$ and $\mathcal{U}$ denote ideal triangulations of $\mathcal{M}$.
Suppose that $\mathcal{T}$ and $\mathcal{U}$ each have at least two tetrahedra,
and also that these two triangulations have the same number (possibly zero) of internal vertices.
\end{notation}

\begin{theorem}[Amendola \cite{Amendola2005}]\label{thm:Amendola}
For any two triangulations $\mathcal{T}$ and $\mathcal{U}$ of a $3$-manifold $\mathcal{M}$, subject to Notation~\ref{notn:monotonic},
there is a finite sequence of 2-3 and 3-2 moves relating $\mathcal{T}$ to $\mathcal{U}$.
\end{theorem}

Theorems~\ref{thm:MatveevPiergallini} and~\ref{thm:Amendola} were originally stated and proved in the dual setting of special spines;
a proof of Theorem~\ref{thm:Amendola} can also be found in \cite{RST2019}.
One advantage of working with special spines is that it is often easier to visualise long sequences of moves in this setting.
With this in mind, we devote the remainder of this section to describing 2-3, 3-2, 0-2 and 2-0 moves on special spines.

Recall that in a triangulation, a 2-3 move takes a pair of tetrahedra that are joined along
a triangular face, and replaces them with three tetrahedra attached around a common edge.
In the dual special spine, the corresponding 2-3 move takes a pair of vertices that are
connected by an edge, and replaces them with three vertices that form the vertices of a triangular face.
Figure~\ref{fig:move23} illustrates the 2-3 and 3-2 moves in a way that emphasises the duality between the triangulation and special spine settings.

\begin{figure}[htbp]
\centering
	\begin{subfigure}[t]{0.45\textwidth}
	\centering
		\begin{tikzpicture}
		\node[inner xsep=0pt, inner ysep=0pt] (Before) at (0,0)
			{\includegraphics[scale=0.55]{Pachner2-3Before.pdf}};
		\node[inner xsep=0pt, inner ysep=0pt] (After) at (4.5,0)
			{\includegraphics[scale=0.55]{Pachner2-3After.pdf}};

		\begin{scope}[thick, line cap=round, -{Stealth}]
		\draw ($(Before.east)+(0,0.3)$)
			-- ($(After.west)+(0,0.3)$)
			node[midway, above, inner sep=3pt] {2-3};
		\draw ($(After.west)+(0,-0.3)$)
			-- ($(Before.east)+(0,-0.3)$)
			node[midway, below, inner sep=3pt] {3-2};
		\end{scope}
		\end{tikzpicture}
	\caption{Version for triangulations.}
	\label{subfig:tri23}
	\end{subfigure}
	\hfill
	\begin{subfigure}[t]{0.45\textwidth}
	\centering
		\begin{tikzpicture}
		\node[inner xsep=2pt, inner ysep=0pt] (Before) at (0,0)
			{\includegraphics[scale=0.6]{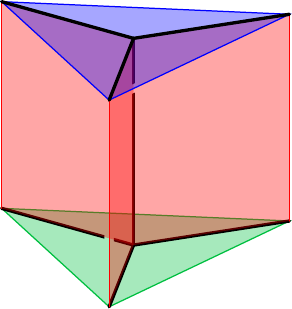}};
		\node[inner xsep=2pt, inner ysep=0pt] (After) at (4.5,0)
			{\includegraphics[scale=0.6]{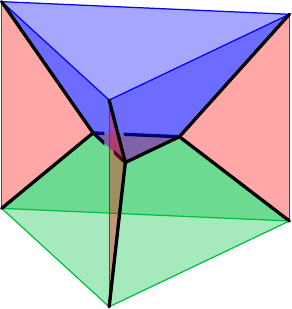}};

		\begin{scope}[thick, line cap=round, -{Stealth}]
		\draw ($(Before.east)+(0,0.3)$)
			-- ($(After.west)+(0,0.3)$)
			node[midway, above, inner sep=3pt] {2-3};
		\draw ($(After.west)+(0,-0.3)$)
			-- ($(Before.east)+(0,-0.3)$)
			node[midway, below, inner sep=3pt] {3-2};
		\end{scope}
		\end{tikzpicture}
	\caption{Version for special spines.}
	\label{subfig:spine23}
	\end{subfigure}
\caption{Dual pictures for the 2-3 and 3-2 moves.}
\label{fig:move23}
\end{figure}

For our purposes, it will be helpful to use a different visualisation of 2-3 moves in the special spine setting.
Specifically, let $P$ be a special spine for a $3$-manifold $\mathcal{M}$, and consider an edge $e$ that connects two distinct vertices in $P$.
Imagine taking a sheet attached to one endpoint of $e$, and dragging this sheet along $e$ until
it crosses the other endpoint, as shown in Figure~\ref{subfig:spine23Generic};
up to ambient isotopy of $P$ in $\mathcal{M}$, this operation is equivalent to performing a 2-3 move on the vertices that form the endpoints of $e$.
When visualising 2-3 moves in this way, we will often simplify our drawings by omitting the sheet that
gets dragged along $e$, as shown in Figure~\ref{subfig:spine23Simplified}.

\begin{figure}[htbp]
\raggedleft
	\begin{subfigure}[t]{0.75\textwidth}
	\centering
		\begin{tikzpicture}

		\node[inner xsep=-18pt, inner ysep=0pt] (before) at (0,0) {
			\includegraphics[scale=0.7]{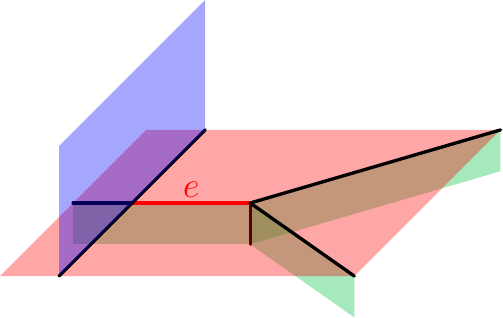}
		};
		\node[inner xsep=-18pt, inner ysep=0pt] (after) at (6.4,0) {
			\includegraphics[scale=0.7]{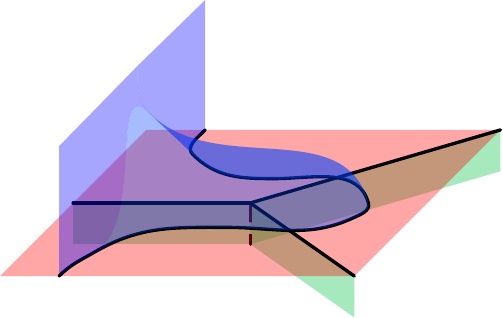}
		};
		\draw[thick, line cap=round, -Stealth] ($(before.east)+(-0.16,-0.56)$) -- node[inner sep=1pt,above]{2-3} ($(after.west)+(0,-0.56)$);

		\end{tikzpicture}
	\caption{Performing a 2-3 move by dragging a sheet along an edge $e$.}
	\label{subfig:spine23Generic}
	\end{subfigure}
	\hfill
	\bigskip
	\hfill
	\begin{subfigure}[t]{0.75\textwidth}
	\centering
		\begin{tikzpicture}

		\node[inner xsep=-18pt, inner ysep=0pt] (before) at (0,0) {
			\includegraphics[scale=0.7]{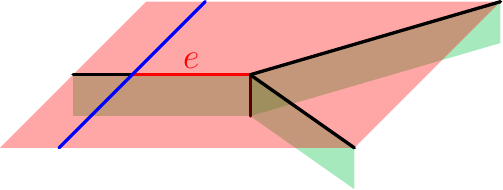}
		};
		\node[inner xsep=-18pt, inner ysep=0pt] (after) at (6.4,0) {
			\includegraphics[scale=0.7]{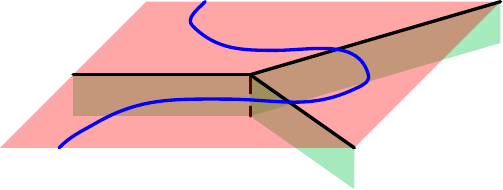}
		};
		\draw[thick, line cap=round, -Stealth] ($(before.east)+(-0.16,0.16)$) -- node[inner sep=1pt,above]{2-3} ($(after.west)+(0,0.16)$);

		\end{tikzpicture}
	\caption{A simplified picture, with the sheet extending upwards from the blue line omitted.}
	\label{subfig:spine23Simplified}
	\end{subfigure}
\caption{Alternative pictures for a 2-3 move on a special spine.}
\label{fig:spine23Alt}
\end{figure}

For 0-2 moves, recall that in a triangulation, such a move takes a pair of triangular faces that share a common edge,
and ``fattens up'' these triangles into a pair of tetrahedra attached around a new edge.
In the dual special spine, the corresponding 0-2 move takes a pair of edges that are both incident to a common face,
and creates a pair of vertices that together form the vertices of a new bigon face.
Figure~\ref{fig:move02} illustrates the 0-2 and 2-0 moves in a way that emphasises the duality between the triangulation and special spine settings.

\begin{figure}[htbp]
\centering
	\begin{subfigure}[t]{0.45\textwidth}
	\centering
		\begin{tikzpicture}
		\node[inner xsep=0pt, inner ysep=0pt] (Before) at (0,0)
			{\includegraphics[scale=0.55]{Tri0-2Before.pdf}};
		\node[inner xsep=0pt, inner ysep=0pt] (After) at (4.5,0)
			{\includegraphics[scale=0.55]{Tri0-2After.pdf}};

		\begin{scope}[thick, line cap=round, -{Stealth}]
		\draw ($(Before.east)+(0,0.3)$)
			-- ($(After.west)+(0,0.3)$)
			node[midway, above, inner sep=3pt] {0-2};
		\draw ($(After.west)+(0,-0.3)$)
			-- ($(Before.east)+(0,-0.3)$)
			node[midway, below, inner sep=3pt] {2-0};
		\end{scope}
		\end{tikzpicture}
	\caption{Version for triangulations.}
	\label{subfig:tri02}
	\end{subfigure}
	\hfill
	\begin{subfigure}[t]{0.45\textwidth}
	\centering
		\begin{tikzpicture}
		\node[inner xsep=2pt, inner ysep=0pt] (Before) at (0,0)
			{\includegraphics[scale=0.6]{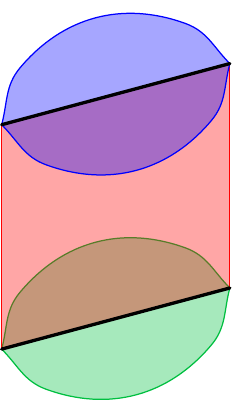}};
		\node[inner xsep=2pt, inner ysep=0pt] (After) at (4.5,0)
			{\includegraphics[scale=0.6]{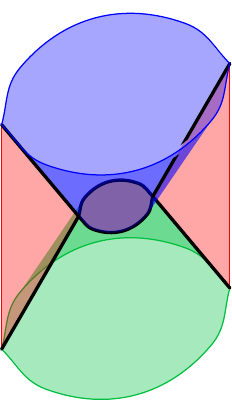}};

		\begin{scope}[thick, line cap=round, -{Stealth}]
		\draw ($(Before.east)+(0,0.3)$)
			-- ($(After.west)+(0,0.3)$)
			node[midway, above, inner sep=3pt] {0-2};
		\draw ($(After.west)+(0,-0.3)$)
			-- ($(Before.east)+(0,-0.3)$)
			node[midway, below, inner sep=3pt] {2-0};
		\end{scope}
		\end{tikzpicture}
	\caption{Version for special spines.}
	\label{subfig:spine02}
	\end{subfigure}
\caption{Dual pictures for the 0-2 and 2-0 moves.}
\label{fig:move02}
\end{figure}

As with 2-3 moves, we will find it helpful to use a different visualisation of 0-2 moves in the special spine setting.
In detail, let $P$ be a special spine for a $3$-manifold $\mathcal{M}$, and consider a curve
$\alpha$ embedded in a face $C$ of $P$ such that the endpoints of $\alpha$ lie on edges of $C$.
Imagine taking a sheet attached to one endpoint of $\alpha$, and dragging this sheet along $\alpha$
until it crosses the other endpoint, as shown in Figure~\ref{subfig:spine02Generic};
up to ambient isotopy of $P$ in $\mathcal{M}$, this operation is equivalent to performing a 0-2 move on the pair of edges incident to the endpoints of $\alpha$.
We will often simplify our drawings of such 0-2 moves by omitting the sheet that gets dragged
along $\alpha$, as shown in Figure~\ref{subfig:spine02Simplified}.

\begin{figure}[t]
\raggedleft
	\begin{subfigure}[t]{0.75\textwidth}
	\centering
		\begin{tikzpicture}

		\node[inner xsep=-18pt, inner ysep=0pt] (before) at (0,0) {
			\includegraphics[scale=0.7]{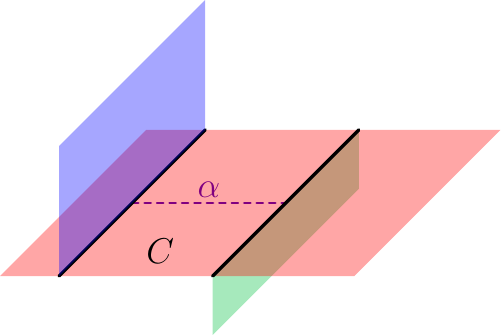}
		};
		\node[inner xsep=-18pt, inner ysep=0pt] (after) at (6.4,0) {
			\includegraphics[scale=0.7]{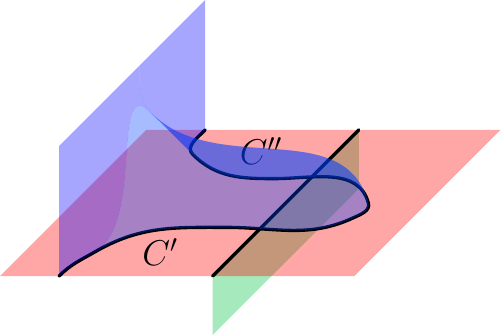}
		};
		\draw[thick, line cap=round, -Stealth] ($(before.east)+(-0.24,-0.56)$) -- node[inner sep=1pt,above]{0-2} ($(after.west)+(0,-0.56)$);

		\end{tikzpicture}
	\caption{Performing a 0-2 move by dragging a sheet along a curve $\alpha$.}
	\label{subfig:spine02Generic}
	\end{subfigure}
	\hfill
	\bigskip
	\hfill
	\begin{subfigure}[t]{0.75\textwidth}
	\centering
		\begin{tikzpicture}

		\node[inner xsep=-18pt, inner ysep=0pt] (before) at (0,0) {
			\includegraphics[scale=0.7]{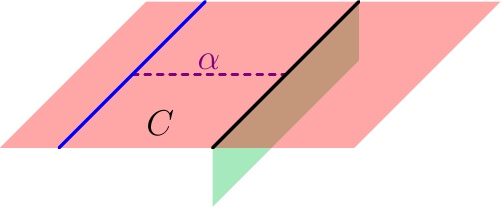}
		};
		\node[inner xsep=-18pt, inner ysep=0pt] (after) at (6.4,0) {
			\includegraphics[scale=0.7]{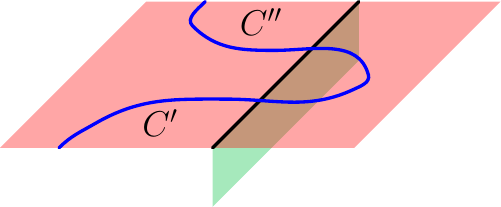}
		};
		\draw[thick, line cap=round, -Stealth] ($(before.east)+(-0.24,0.16)$) -- node[inner sep=1pt,above]{0-2} ($(after.west)+(0,0.2)$);

		\end{tikzpicture}
	\caption{A simplified picture, with the sheet extending upwards from the blue line omitted.}
	\label{subfig:spine02Simplified}
	\end{subfigure}
\caption{Alternative pictures for a 0-2 move on a special spine.}
\label{fig:spine02Alt}
\end{figure}

It is straightforward to check that performing a 0-2 move on a special spine always yields a new special spine of the same $3$-manifold.
However, performing a 2-0 move on a special spine is \emph{not} guaranteed to produce a special spine.
To see why, let $P_0$ and $P_2$ denote the spines on the left and right, respectively, of Figure~\ref{subfig:spine02Generic}.
Even if $P_2$ is special, it is possible for $P_0$ to be non-special in one of the following ways:
\begin{enumerate}[label={(\alph*)}]
\item\label{move20:coincide}
Suppose the two faces $C'$ and $C''$ coincide to form a single disc.
In this case, the face $C$ must be either an annulus or a M\"{o}bius band, so $P_0$ cannot be special.
\item\label{move20:distinct}
Suppose the two faces $C'$ and $C''$ form two distinct discs, but the two vertices shown in
Figure~\ref{subfig:spine02Generic} are the \emph{only} vertices incident to either $C'$ or $C''$.
In this case, the face $C$ must be a single disc whose boundary contains no vertices, so again $P_0$ cannot be special.
\end{enumerate}
We need to rule out these two degenerate cases to ensure that a 2-0 move preserves the property of being a special spine.

Translating this into the dual triangulation setting, we can perform a 2-0 move about an edge $e$ in a triangulation $\mathcal{T}$
(with the restriction that the result is a triangulation homeomorphic to $\mathcal{T}$) if and only if the following conditions are all satisfied:
\begin{itemize}
\item The edge $e$ has degree two (i.e., as an equivalence class of tetrahedron edges, $e$ is formed from exactly
two tetrahedron edges), and is incident to two distinct tetrahedra $\Delta'$ and $\Delta''$.
(In the dual special spine setting, this corresponds to the fact that a 2-0 move can only be performed on a bigon face that meets two distinct vertices.)
\item Let $\varepsilon'$ denote the edge of $\Delta'$ opposite $e$, and let $\varepsilon''$ denote the edge of $\Delta''$ opposite $e$.
The edges $\varepsilon'$ and $\varepsilon''$ must be distinct.
(This is dual to ruling out case~\ref{move20:coincide}.)
\item Let $f'$ and $g'$ denote the two faces of $\Delta'$ that share the common edge $\varepsilon'$,
and let $f''$ and $g''$ denote the two faces of $\Delta''$ that share the common edge $\varepsilon''$.
We cannot simultaneously have $f'$ identified to $g'$ and $f''$ identified to $g''$.
(This is dual to ruling out case~\ref{move20:distinct}.)
\end{itemize}

\section{Unimodal sequences of elementary moves}\label{sec:semiMono}

Recall from Section~\ref{subsec:introMonotonic} that we call a finite sequence of elementary moves \textbf{unimodal} if it breaks up into two parts:
\begin{itemize}
\item first, a \textbf{(monotonic) ascent} which consists only of moves that increase the number of tetrahedra
(such as 2-3 or 0-2 moves); and
\item second, a \textbf{(monotonic) descent} which consists only of moves that decrease the number of tetrahedra
(such as 3-2 or 2-0 moves).
\end{itemize}
We conjecture that the result of Amendola mentioned in Section~\ref{subsec:moves} (Theorem~\ref{thm:Amendola}) can be strengthened as follows:

\begin{restatable}{conjecture}{conjMonotonic}
\label{conj:monotonic}
For any two triangulations $\mathcal{T}$ and $\mathcal{U}$ of a $3$-manifold $\mathcal{M}$, subject to Notation~\ref{notn:monotonic},
there is a unimodal sequence of 2-3 and 3-2 moves relating $\mathcal{T}$ to $\mathcal{U}$.
\end{restatable}

This conjecture remains open.
Our main theoretical contribution in this paper is to prove a variant of Conjecture~\ref{conj:monotonic},
where the monotonic descent involves 2-0 moves rather than 3-2 moves;
see Theorem~\ref{thm:semiMono} below.
This section is mostly devoted to proving Theorem~\ref{thm:semiMono}, though at the end we also briefly discuss how our proof strategy
could potentially be adapted to prove Conjecture~\ref{conj:monotonic}.

\begin{restatable}{theorem}{thmSemiMono}
\label{thm:semiMono}
For any two triangulations $\mathcal{T}$ and $\mathcal{U}$ of a $3$-manifold $\mathcal{M}$, subject to Notation~\ref{notn:monotonic},
there is a unimodal sequence of 2-3 and 2-0 moves relating $\mathcal{T}$ to $\mathcal{U}$.
\end{restatable}

\subsection{Walls and arches}\label{subsec:arch}

Our proof of Theorem~\ref{thm:semiMono}, which we present in Section~\ref{subsec:semiMonoProof},
uses a variation of a technique from Matveev's proof of Theorem~\ref{thm:MatveevPiergallini}.
The construction at the heart of this technique is Matveev's \textbf{arch-with-membrane};
for brevity, we will simply refer to an arch-with-membrane as an \textbf{arch}.\footnote{
Matveev uses the word ``arch'' to describe a slightly different construction from the arch-with-membrane.
For our purposes, there is no risk of confusion because we will only need to work with the arch-with-membrane.}
We have two goals in this section.
\begin{itemize}
\item First, we introduce some terminology that will help us describe and work with arches.
\item Second, we use this terminology to paraphrase some key ideas from Matveev's proof, since we will be reusing these ideas in Section~\ref{subsec:semiMonoProof}.
\end{itemize}

To describe how we construct arches, we first define a sort of precursor to an arch.
For this, let $\mathcal{M}$ be a $3$-manifold with no $2$-sphere boundary components, and consider any particular special spine $P$ for $\mathcal{M}$.
A \textbf{wall} for $P$ is an embedding of the rectangle $[0,1]\times[0,1]$ into $\mathcal{M}$ such that:
\begin{itemize}
\item the intersection of the rectangle with $P$ is given by the curve $[0,1]\times\{0\}$---we call this curve the \textbf{base} of the wall;
\item the base of the wall only ever meets the singular graph $S_P$ via transverse intersections of $(0,1)\times\{0\}$ with
edges of $P$---we call each such point of intersection a \textbf{buttress} of the wall; and
\item there is at least one buttress.
\end{itemize}
The \textbf{length} $\ell$ of the wall is given by the number of buttresses minus one;
we (arbitrarily) orient the base of the wall, and label the buttresses from $0$ to $\ell$ in the order that they appear along the base.
Figure~\ref{fig:wallExample} shows an example of a wall of length $4$.

\begin{figure}[htbp]
\centering
	\includegraphics[scale=0.55]{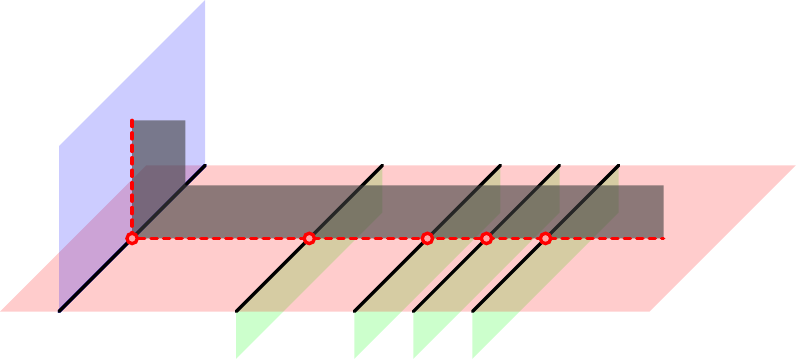}
\caption{A wall (shaded grey) of length $4$.
The base of this wall is drawn as a dashed red curve, and the five buttresses are marked by small red circles.}
\label{fig:wallExample}
\end{figure}

We first describe how to construct an arch of length $0$.
Let $b_0$ denote the \nth{0} buttress of a wall $W$, and focus on a small neighbourhood $N$ of $b_0$ inside $\mathcal{M}$.
On the edge containing $b_0$, consider two points $p$ and $p'$ that lie on either side of $b_0$;
inside $N$, let $\alpha$ denote a curve from $p$ to $p'$ that lies in the face that only meets $W$ at the buttress $b_0$.
We construct a length-$0$ arch by performing a 0-2 move along $\alpha$;
this is more easily visualised by first performing an isotopy of the spine, as illustrated in Figure~\ref{fig:arch0-2}.
We usually simplify our drawings of arches (of any length, not just of length $0$), as shown in Figure~\ref{fig:simpArch}.

\begin{figure}[htbp]
\centering
	\includegraphics[scale=0.55]{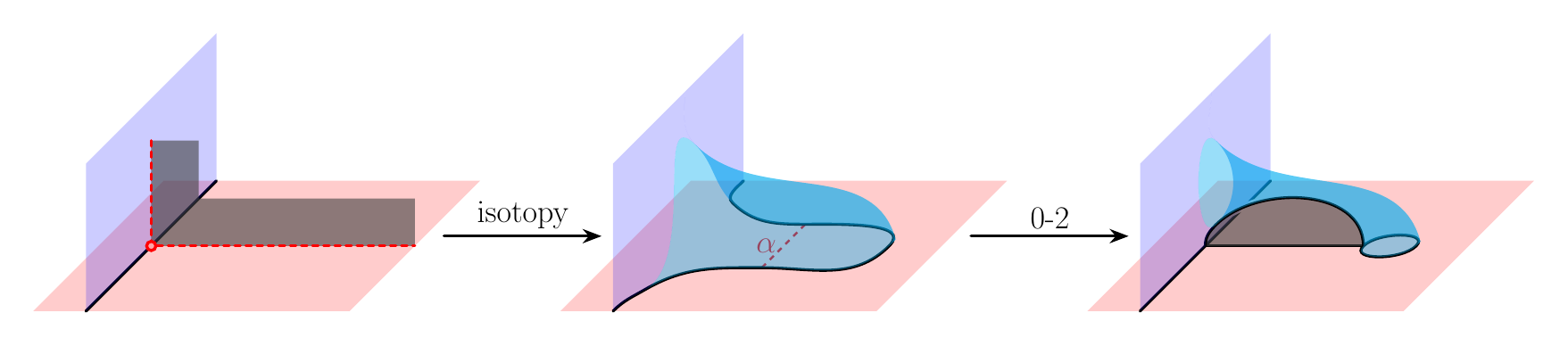}
\caption{At the \nth{0} buttress of a wall, we can create a length-$0$ arch by performing a 0-2 move along the curve $\alpha$.
Going backwards, we can destroy a length-$0$ arch by performing a 2-0 move.}
\label{fig:arch0-2}
\end{figure}

\begin{figure}[htbp]
\centering
\includegraphics[scale=1]{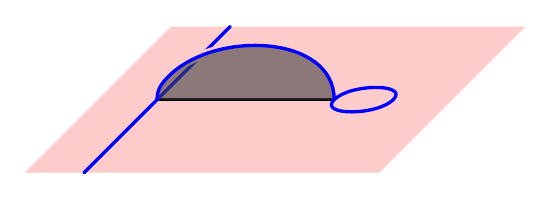}
\caption{A simplified drawing of a length-$0$ arch, with the sheet attached to the blue edges omitted.
We also simplify our drawings of positive-length arches in an analogous way.}
\label{fig:simpArch}
\end{figure}

By creating a length-$0$ arch, we introduce two new faces into our spine:
\begin{itemize}
\item a new monogon, which we call the \textbf{artificial monogon}; and
\item a new bigon (shaded grey in Figure~\ref{fig:simpArch}), which we call the \textbf{artificial membrane}.
\end{itemize}
Very shortly, we will describe how to construct arches of arbitrary positive length $\ell$;
for such arches, the artificial membrane will be an $(\ell+2)$-gon, rather than a bigon.
To go along with this terminology, we will call an edge of our spine \textbf{artificial} if it is incident to either the artificial membrane or the artificial monogon;
otherwise, we will call the edge \textbf{natural}.

To construct an arch of positive length $\ell$, start with a wall $W$ of length $\ell$.
As above, use a 0-2 move to create an arch of length $0$.
We extend this arch to length $\ell$ using the following two steps:
\begin{enumerate}[label={(\arabic*)}]
\item Perform a sequence of $\ell$ 0-2 moves along the base of the wall $W$.
\item Perform a sequence of $\ell$ 3-2 moves to turn the artificial membrane into an $(\ell+2)$-gon.
\end{enumerate}
Figure~\ref{fig:extendArch} illustrates this extension procedure for the case $\ell=4$.

\begin{figure}[htbp]
\centering
	\includegraphics[scale=0.55]{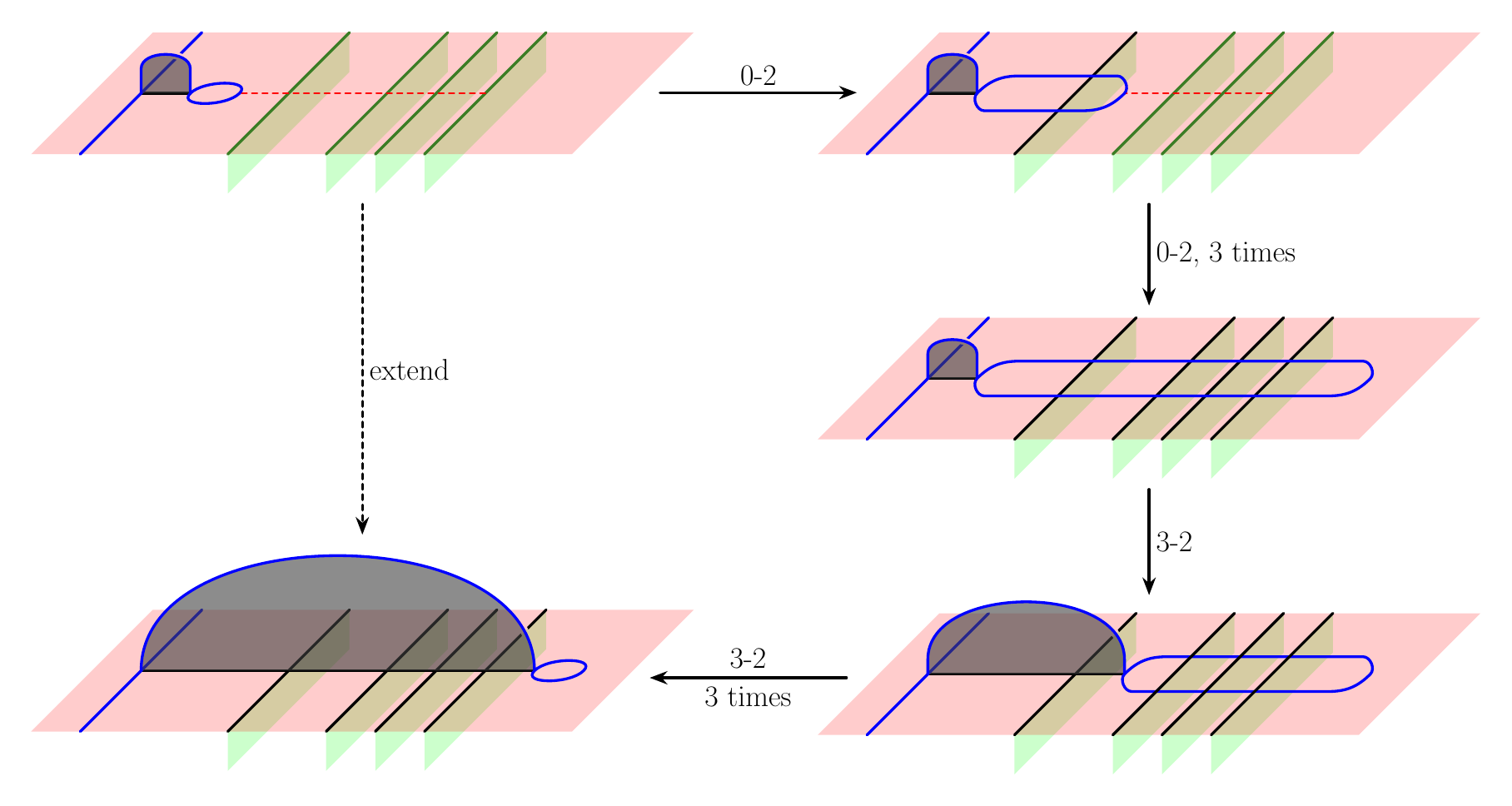}
\caption{Using a sequence of $\ell$ 0-2 moves followed by $\ell$ 3-2 moves, we can extend an arch of length $0$ to an arch of length $\ell$.
Going backwards, we can reduce the length from $\ell$ to $0$ using a sequence of $\ell$ 2-3 moves followed by $\ell$ 2-0 moves.
In this example, $\ell=4$.}
\label{fig:extendArch}
\end{figure}

\begin{observation}\label{obs:removeArch}
An arch of length $\ell$ can be \emph{removed} using a unimodal sequence consisting of $\ell$
2-3 moves followed by $(\ell+1)$ 2-0 moves. Specifically, we can do this using the following two steps:
\begin{enumerate}[nosep,label={(\arabic*)}]
\item By going backwards through the extension procedure, reduce the length to $0$ using a unimodal sequence consisting of $\ell$ 2-3 moves followed by $\ell$ 2-0 moves.
\item Use an additional 2-0 move (the inverse of the arch-creation move shown in Figure~\ref{fig:arch0-2}) to destroy the arch.
\end{enumerate}
\end{observation}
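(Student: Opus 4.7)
The plan is to invert, step by step, the entire construction that produced the arch, and to tally the resulting moves. The construction consists of two stages: the initial arch creation, which is a single 0-2 move along the curve $\alpha$ in Figure \ref{fig:arch0-2}; and the extension from length $0$ to length $\ell$, which (per the description preceding Figure \ref{fig:extendArch}) is a series of $\ell$ 0-2 moves followed by a series of $\ell$ 3-2 moves. Since every move we have defined has an inverse, a reversal in the opposite order should yield the sequence stated in the observation.

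First I would reverse the extension: applied in backward order, we undo the $\ell$ 3-2 moves with $\ell$ 2-3 moves, and then undo the $\ell$ 0-2 moves with $\ell$ 2-0 moves, bringing the arch back to length $0$. Next I would perform one additional 2-0 move, inverse to the arch-creation 0-2 move, to destroy the length-$0$ arch. Adding up, the full removal sequence consists of $\ell$ 2-3 moves followed by $(\ell+1)$ 2-0 moves, which is precisely the count in the statement, and these moves occur in the claimed order (all 2-3 moves before all 2-0 moves).

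The main obstacle, to the extent that there is one, is verifying that each inverse move is actually legal when it is applied. For the 2-3 moves this is automatic, since the local configuration produced by any 3-2 move is exactly the configuration required for its inverse. For the 2-0 moves one must check the non-trivial condition that the two 2-cells being merged are distinct, as emphasised at the end of section \ref{subsec:moves}. Here I would argue as follows: because the inverse moves are applied in reverse order of the forward construction, the spine encountered immediately before each 2-0 move is exactly the spine that existed immediately after the corresponding forward 0-2 move, and in that spine the two 2-cells created by the 0-2 move are distinct by construction (a 0-2 move is defined only when this distinctness holds). Hence every 2-0 move in the removal sequence is valid, and the observation follows.
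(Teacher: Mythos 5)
Your proposal is correct and follows essentially the same route as the paper: the Observation's own justification is exactly the two-step reversal you describe (undo the extension's $\ell$ 3-2 moves and $\ell$ 0-2 moves with $\ell$ 2-3 moves followed by $\ell$ 2-0 moves, then one further 2-0 move inverting the arch-creation 0-2 move of Figure \ref{fig:arch0-2}). Your added check that each 2-0 move merges distinct 2-cells goes slightly beyond what the paper records, and is a reasonable extra remark rather than a divergence in method.
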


Creating an arch on a spine $P$ is useful because doing so has ``minimal impact'' on the structure of $P$.
To pin down this intuition, let $W$ denote a wall of length $\ell$ for $P$, and let $Q$ denote the spine obtained by using $W$ to add an arch of length $\ell$ to $P$.
For each vertex $v$ of $Q$, we classify this vertex into one of the following types:
\begin{itemize}
\item Call $v$ an \textbf{artificial vertex} if it is incident to \emph{both} the artificial membrane and the artificial monogon;
there is exactly one such vertex.
\item Call $v$ a \textbf{subdividing vertex} if it is incident to the artificial membrane, but \emph{not} incident to the artificial monogon;
there are exactly $\ell+1$ such vertices.
\item Call $v$ a \textbf{natural vertex} if it is incident to \emph{neither} the artificial membrane nor the artificial monogon.
\end{itemize}
Observe that there is a natural bijection between the vertices of $P$ and the \emph{natural} vertices of $Q$;
there is also a natural bijection between the buttresses of $W$ and the subdividing vertices of $Q$.

With this in mind, consider the singular graphs $S_Q$ and $S_P$.
Observe that after deleting the artificial vertex and the artificial edges from $S_Q$, we obtain a new graph $G_Q$ that is a \emph{subdivision} of $S_P$:
\begin{itemize}
\item every vertex in $S_P$ corresponds to a natural vertex in $G_Q$; and
\item every edge in $S_P$ corresponds to a ``chain'' of edges in $G_Q$ that starts at a natural vertex,
possibly passes through one or more subdividing vertices, and ends at a natural vertex.
\end{itemize}
Intuitively, if we ``ignore'' the artificial vertex and artificial edges, then the singular graphs $S_Q$ and $S_P$ can be considered ``more-or-less equivalent'';
thus, we can think of the spine $Q$ as being ``just like $P$, but with an extra arch''.

We now outline how Matveev uses arches in his proof of Theorem~\ref{thm:MatveevPiergallini} (see \cite{Matveev2007} for
more details), since a similar idea will be useful for us in Section~\ref{subsec:semiMonoProof}.
Here, we rephrase Matveev's idea in a way that is better suited to our purposes.
Prior to proving Theorem~\ref{thm:MatveevPiergallini}, Matveev establishes the following (non-trivial) facts:
\begin{itemize}
\item A 0-2 move on a special spine can be replaced by a sequence (not
necessarily unimodal) of 2-3 and/or 3-2 moves.
Going backwards, if a 2-0 move results in a special
spine, then it can be replaced by a sequence of 2-3 and/or 3-2 moves.
\item If $P$ and $Q$ are special spines of the same $3$-manifold, each
with at least two vertices, then $P$ is connected to $Q$ by a sequence
$\varSigma$ of 2-3, 3-2, 0-2 and/or 2-0 moves.
\end{itemize}
From here, Matveev proceeds by turning the sequence $\varSigma$ into a new sequence that only uses 2-3 and 3-2 moves.

Although the 0-2 moves in $\varSigma$ can always be replaced by 2-3 and 3-2 moves,
we might not be able to replace all of the 2-0 moves, because a 2-0 move could yield a spine that is not special.
Matveev circumvents this issue by creating an arch every time a 2-0 move is supposed to be performed.
To see how this works, consider a 2-0 move $\vartheta$ that turns a spine $P$ into another spine $Q$.
Instead of performing the move $\vartheta$, we can perform the 2-3 move illustrated in Figure~\ref{fig:replace2-0withArch},
which turns $P$ into a spine that is just like $Q$ except it has an extra arch of length $1$.
Note that this 2-3 move is always possible, since we must initially have two distinct vertices to perform the 2-0 move $\vartheta$.
(In \cite{Matveev2007}, Matveev actually creates the arch using a 0-2 move followed by a 3-2 move.
Our 2-3 move gives the same result, but is more useful for our purposes in Section~\ref{subsec:semiMonoProof}.)

\begin{figure}[htbp]
\centering
	\includegraphics[scale=0.7]{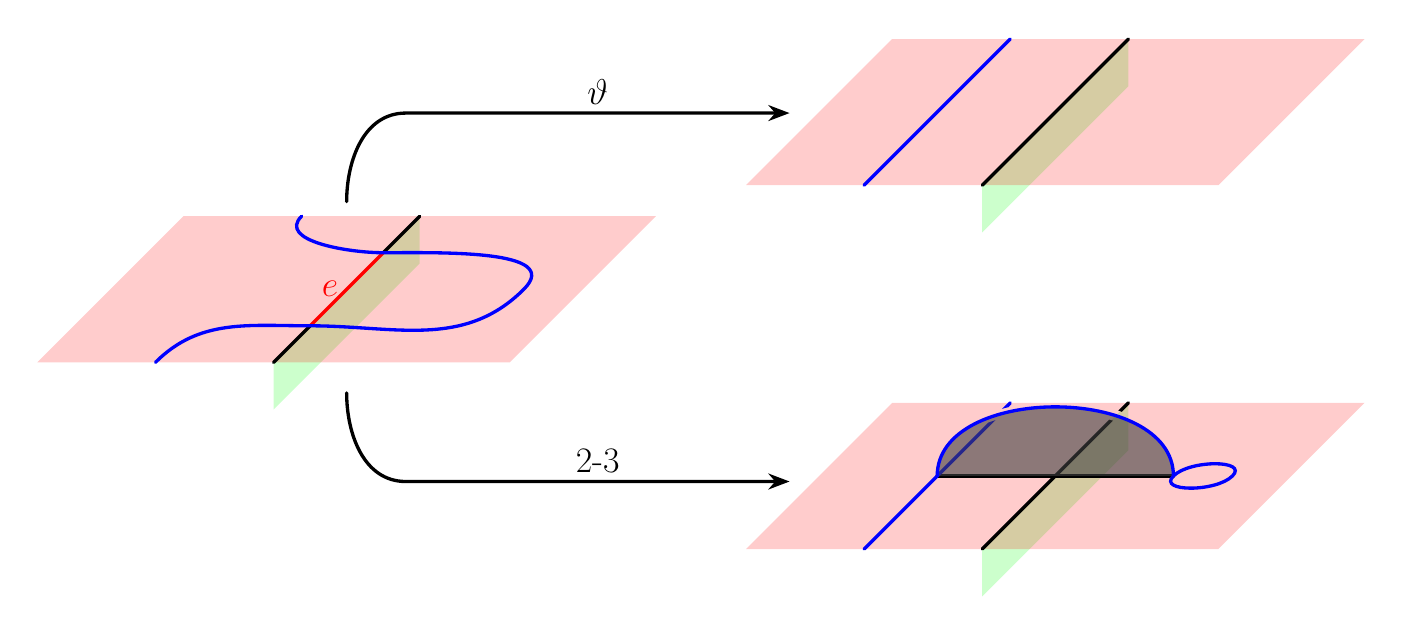}
\caption{Instead of turning $P$ into $Q$ by performing the 2-0 move $\vartheta$, we can perform a
2-3 move along the red edge $e$ to turn $P$ into a spine that looks like $Q$ with an extra arch.}
\label{fig:replace2-0withArch}
\end{figure}

As we have already mentioned, Matveev's idea is to replace every 2-0 move by creating an arch in the manner just described.
The extra arches that arise in this way can be moved around quite freely using 2-3, 3-2 and 0-2 moves,
so that they never prevent us from performing any of the 2-3, 3-2 and 0-2 moves in $\varSigma$;
the specific details are unimportant here, but it is worth noting that we rely on a slight variation of this idea in Section~\ref{subsec:semiMonoProof}.
At the end, we can use Observation~\ref{obs:removeArch} to remove all the extra arches using 2-3 and 2-0 moves;
it is not hard to see that every intermediate spine in this removal process must be special, which means that the 2-0 moves can be realised by 2-3 and 3-2 moves.
In this way, Matveev turns $\varSigma$ into a sequence consisting only of 2-3 and 3-2 moves.

\subsection{Constructing unimodal sequences}\label{subsec:semiMonoProof}

In this section, we prove Theorem~\ref{thm:semiMono} by giving a procedure for transforming a non-unimodal sequence into a unimodal one.
At the end, we also briefly discuss what would be required to adapt this strategy to prove Conjecture~\ref{conj:monotonic}.

To begin, we use the ideas discussed in Section~\ref{subsec:arch} to prove Lemma~\ref{lem:localSemiMono} below.
We then use this lemma to prove Theorem~\ref{thm:semiMono}.

\begin{lemma}\label{lem:localSemiMono}
Let $P$ and $Q$ be two special spines, and suppose $P$ is connected to $Q$ by a sequence $\gamma,\delta_1,\ldots,\delta_k$,
where $k\geqslant0$, $\gamma$ is a 3-2 move, and each $\delta_i$ is a 2-3 move.
Then $P$ is related to $Q$ by a unimodal sequence of 2-3 and 2-0 moves.
\end{lemma}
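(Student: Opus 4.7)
My plan is to adapt Matveev's arch-substitution trick from section~\ref{subsec:arch} to the situation at hand, but in the ``opposite'' direction: rather than replacing a 2-0 move by a 2-3 move that leaves an arch behind, I will replace the initial 3-2 move $\gamma$ by a short sequence of 2-3 moves that creates an arch, and then destroy that arch at the very end of the sequence using Observation~\ref{obs:removeArch}.

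Write $P' = \gamma(P)$ and $P_i = \delta_i(\cdots \delta_1(P'))$, so $P_k = Q$. In step one, I would exhibit a short sequence $\sigma$ of 2-3 moves on $P$ whose output $P^{\star}$ differs from $P'$ only by the presence of an arch attached along the edge of $P'$ that $\gamma$ creates. The vertex counts are consistent: each 2-3 move adds one vertex, an arch of length $\ell$ contributes $\ell+2$ extra vertices on top of $P'$, and $|P| = |P'| + 1$, so a run of $\ell+1$ 2-3 moves from $P$ lands on a spine with the correct vertex count to be $P' + \text{arch of length }\ell$. The simplest case is probably $\ell = 1$, using two 2-3 moves, and the concrete picture for $\sigma$ should come from an analogue of Matveev's replacement construction in Figure~\ref{fig:replace2-0withArch}, drawn locally inside the triangular 2-cell that $\gamma$ would collapse.

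In step two, I would apply $\delta_1, \ldots, \delta_k$ to $P^{\star}$ in order. A 2-3 move is purely expansive (it adds a new triangular 2-cell without merging any existing 2-cells or identifying any vertices), and the arch created in step one is confined to a small neighbourhood of a single edge of $P'$. Consequently, each $\delta_i$ transfers naturally to a 2-3 move on the corresponding arched spine that leaves the arch intact, so after all $k$ moves we reach $Q$ with the same extra arch. In step three, Observation~\ref{obs:removeArch} with $\ell = 1$ supplies one further 2-3 move followed by two 2-0 moves that destroy this arch. The total sequence from $P$ to $Q$ is thus $\sigma$, then $\delta_1, \ldots, \delta_k$, then the arch-removal 2-3 moves, then the arch-removal 2-0 moves, which is precisely a block of 2-3 moves followed by a block of 2-0 moves, as required.

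The main obstacle I expect is the construction in step one: although the vertex counts work out and the analogy with Matveev's replacement move makes existence highly plausible, a clean argument requires drawing the local picture around the triangular 2-cell collapsed by $\gamma$ and verifying that a carefully chosen short sequence of 2-3 moves really does produce the intended arch on the intended edge of $P'$. A secondary and easier point, which nonetheless deserves explicit mention in the written proof, is that in step two no $\delta_i$ ever needs to use an artificial edge or vertex of the arch as its input; this should follow because each $\delta_i$ is originally prescribed as a move on the natural spine $P_{i-1}$ and because 2-3 moves act only locally and only additively, but it is worth stating because it is exactly what lets us avoid having to use any 3-2 or 0-2 moves to ``move the arch out of the way'' during the middle phase of the sequence.
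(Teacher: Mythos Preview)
Your overall strategy matches the paper's: replace $\gamma$ by two 2-3 moves that produce $Q_0$ with an extra arch of length~$1$, then carry the $\delta_i$ across, then remove the arch via Observation~\ref{obs:removeArch}. Step one is exactly what the paper does (Figure~\ref{fig:copycatGamma}), and you are right that this is not hard once the picture is drawn.

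The genuine gap is in step two, which you label a ``secondary and easier point''. The arch of length~$1$ comes with two \emph{subdividing vertices}, and these sit on natural edges of $Q_0$: the edge that carried those vertices is now a \emph{chain} of shorter edges in $P^{\star}$. If some $\delta_i$ is a 2-3 move along precisely such a subdivided edge, then its two endpoint vertices $u_i',v_i'$ are no longer joined by a single edge in the arched spine, and the move cannot be performed as stated. Your argument that ``each $\delta_i$ is originally prescribed on the natural spine'' does not rule this out: the prescription lives in $Q_{i-1}$, but you must execute it in $P_{i-1}$, where the relevant edge may be subdivided. The paper confronts this head-on: whenever the chain $e_i'$ has intermediate subdividing vertices, it inserts extra 2-3 moves (Figure~\ref{fig:shiftArchFragment}) that push the obstructing arch fragment off the chain one vertex at a time. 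Each such shift \emph{increases} the arch length by one, so by the time you reach $P_k$ the arch may have length $\ell \gg 1$, and the final removal needs $\ell$ 2-3 moves followed by $\ell+1$ 2-0 moves, not just one and two as you wrote.

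So the fix is: keep your steps one and three, but in step two allow auxiliary 2-3 ``shifting'' moves whenever the arch blocks a $\delta_i$, track the resulting growth of the arch length, and use that final length in Observation~\ref{obs:removeArch}. All inserted moves are still 2-3 moves, so the conclusion is unchanged.
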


\begin{proof}
Let $Q_0$ denote the spine obtained from $P$ by applying the 3-2 move $\gamma$, and for each $i\in\{1,\ldots,k\}$ let $Q_i$
denote the spine obtained from $Q_{i-1}$ by applying the 2-3 move $\delta_i$ (so, in particular, $Q_k=Q$).
Our strategy is to show that there is a sequence $P_0,\ldots,P_k$ of spines such that:
\begin{itemize}
\item for each $i\in\{0,\ldots,k\}$, $P_i$ looks just like $Q_i$ but with an extra arch;
\item $P$ can be transformed into $P_0$ using two 2-3 moves; and
\item for each $j\in\{1,\ldots,k\}$, $P_{j-1}$ can be transformed into $P_j$ using some finite sequence of 2-3 moves.
\end{itemize}
Once we have such a sequence, we can complete the proof by using Observation~\ref{obs:removeArch} to remove the extra arch in $P_k$,
and hence transform $P_k$ into $Q$ using a unimodal sequence of 2-3 and 2-0 moves.

To construct the sequence $P_0,\ldots,P_k$, we proceed by induction on $k$.
When $k=0$, all we need to show is that we can use two 2-3 moves to turn $P$ into a new spine $P_0$ that looks like $Q_0$ with an extra arch.
In other words, at the cost of introducing an arch, we need to ``copy'' the 3-2 move $\gamma$ using two 2-3 moves;
we can do this in the manner illustrated in Figure~\ref{fig:copycatGamma}.

\begin{figure}[htbp]
\centering
	\includegraphics[scale=0.55]{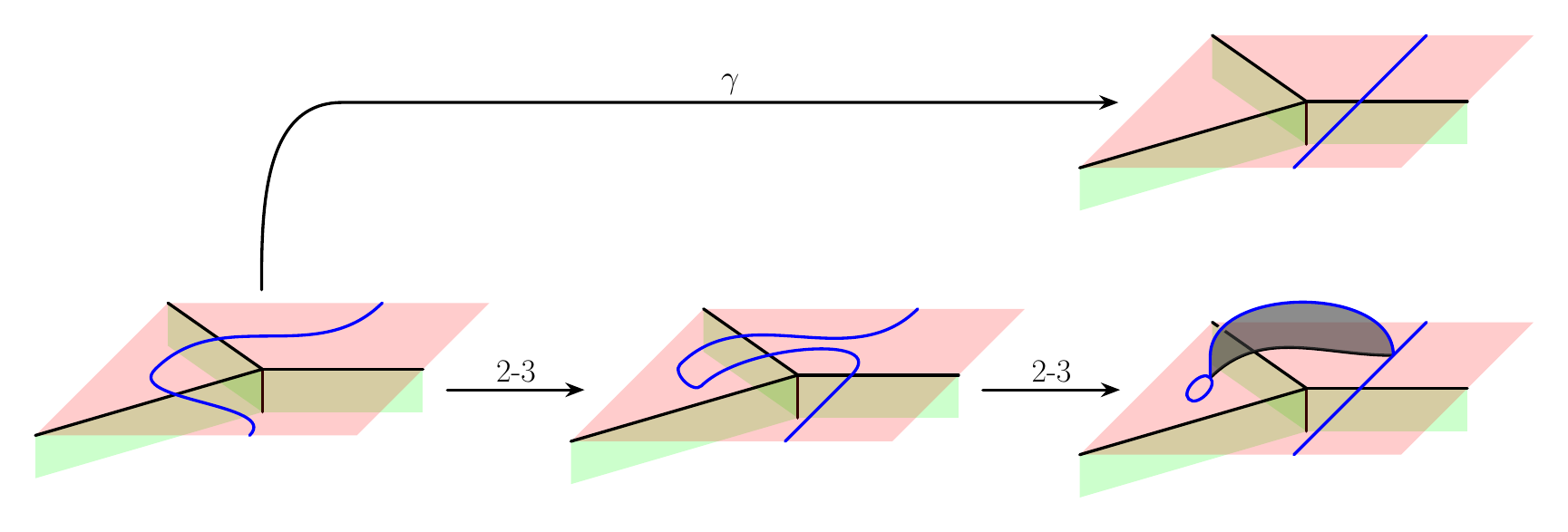}
\caption{Converting the 3-2 move $\gamma$ into 2-3 moves, at the cost of creating an extra arch.}
\label{fig:copycatGamma}
\end{figure}

Before we describe the inductive step, it is helpful to consider an explicit example with $k=1$.
Specifically, consider the 3-2 move $\gamma$ and the 2-3 move $\delta_1$ shown in Figure~\ref{fig:replacementExample}.
After ``copying'' $\gamma$ using two 2-3 moves, we would like to ``copy'' $\delta_1$.
However, $\delta_1$ is initially obstructed by the extra arch that we created when we ``copied'' $\gamma$.
To circumvent this, we simply use an additional 2-3 move to shift the arch out of the way, which then allows us to perform a 2-3 move identical to $\delta_1$.
This captures the essence of our proof:
once we have ``copied'' the initial 3-2 move $\gamma$, we can always ``copy'' the remaining 2-3 moves
$\delta_1,\ldots,\delta_k$, using additional 2-3 moves to shift the arch whenever necessary.

\begin{figure}[htbp]
\centering
	\begin{tikzpicture}

	\newcommand{\Scale}{0.42}
	\newcommand{\ShiftX}{5.6}
	\newcommand{\ShiftY}{-3.5}
	\newcommand{\ShiftArrow}{-0.6}

	\node[inner xsep=-10pt, inner ysep=0pt] (exaInitial) at (0,0) {
		\includegraphics[scale=\Scale]{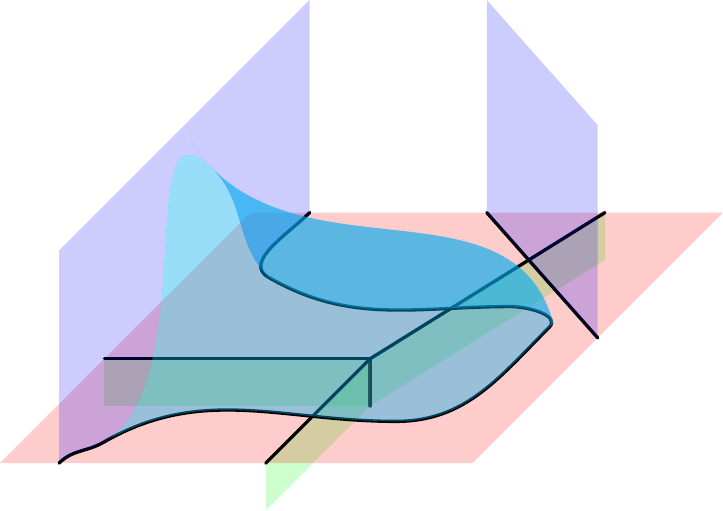}
	};
	\node[inner xsep=-10pt, inner ysep=0pt] (exaAfter32) at (\ShiftX,0) {
		\includegraphics[scale=\Scale]{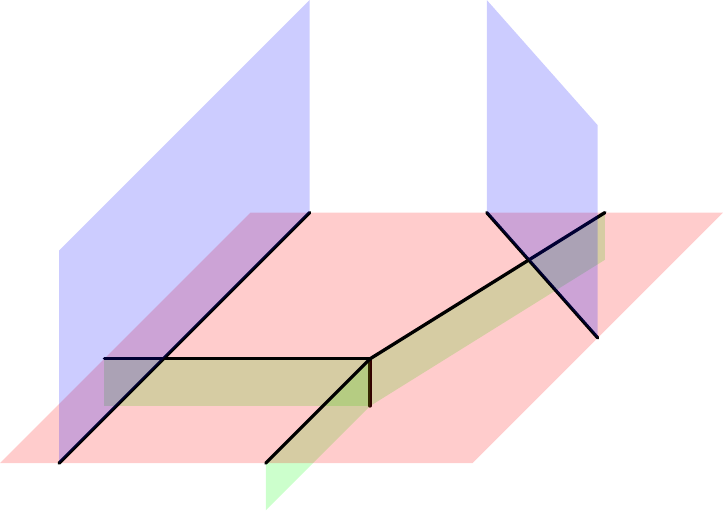}
	};
	\node[inner xsep=-10pt, inner ysep=0pt] (exaFinal) at (2*\ShiftX,0) {
		\includegraphics[scale=\Scale]{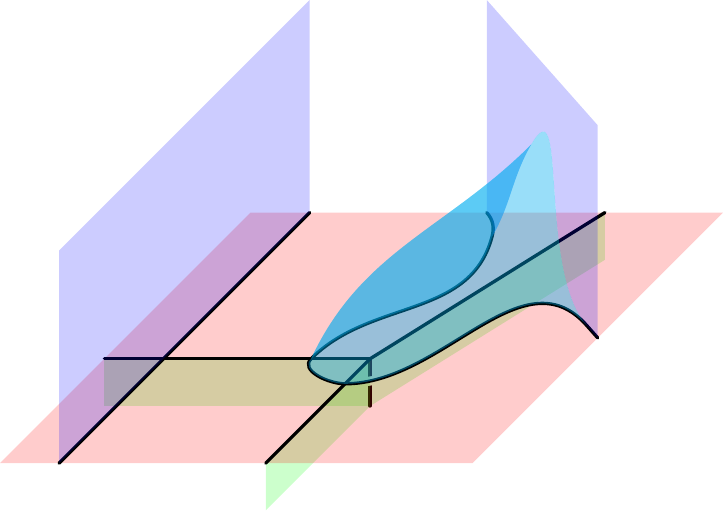}
	};

	\node[inner xsep=-10pt, inner ysep=-7pt] (simInitial) at (0,\ShiftY) {
		\includegraphics[scale=\Scale]{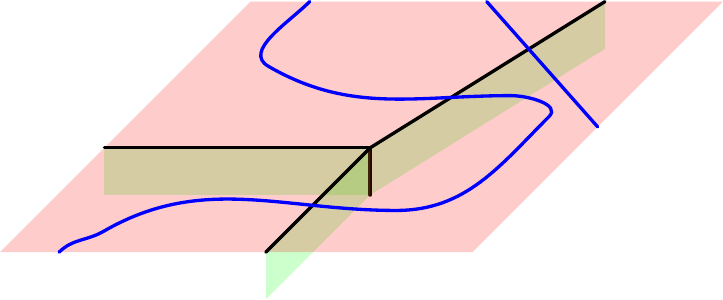}
	};
	\node[inner xsep=-10pt, inner ysep=-7pt] (simArch) at (\ShiftX,\ShiftY) {
		\includegraphics[scale=\Scale]{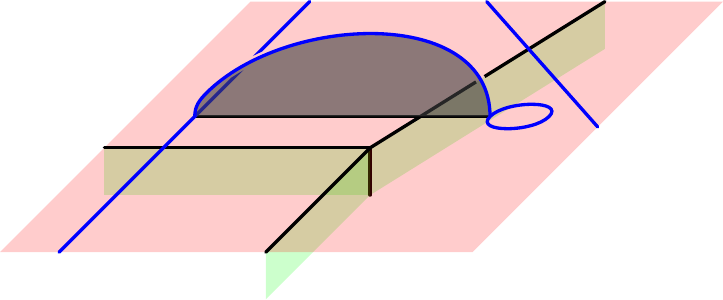}
	};
	\node[inner xsep=-10pt, inner ysep=-7pt] (simShift23) at (2*\ShiftX,\ShiftY) {
		\includegraphics[scale=\Scale]{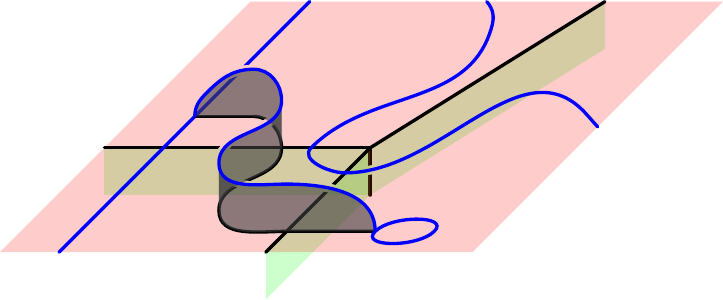}
	};

	\node[inner xsep=-10pt, inner ysep=1pt] (simShiftArch) at (1.25*\ShiftX,\ShiftY-3) {
		\includegraphics[scale=\Scale]{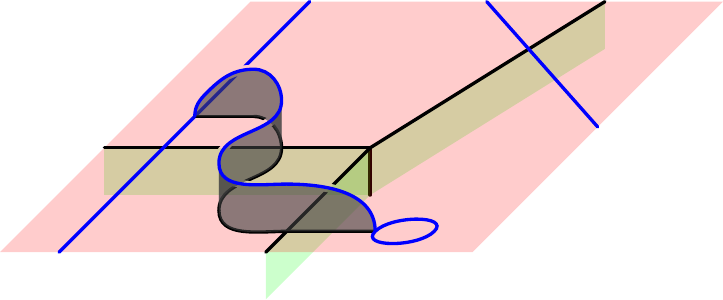}
	};

	\draw[thick, line cap=round, -Stealth] ($(exaInitial.east)+(-0.4,\ShiftArrow)$) -- node[inner sep=1pt, above]{$\gamma$} ($(exaAfter32.west)+(0,\ShiftArrow)$);
	\draw[thick, line cap=round, -Stealth] ($(exaAfter32.east)+(-0.4,\ShiftArrow)$) -- node[inner sep=1pt, above]{$\delta_1$} ($(exaFinal.west)+(0,\ShiftArrow)$);

	\draw[thick, line cap=round, -Stealth] ($(simInitial.east)+(-0.55,0)$) -- node[inner sep=0pt, above]{2-3($\times2$)} ($(simArch.west)+(0.3,0)$);

	\draw[thick, line cap=round, -Stealth] ($(simArch.south)$) -- node[inner sep=1pt, above right]{2-3} ($(simShiftArch.north)$);
	\draw[thick, line cap=round, -Stealth] ($(simShiftArch.north)+(1.2,0)$) -- node[inner sep=0pt, above left]{$\delta_1$} ($(simShift23.south)+(-1.2,0)$);

	\end{tikzpicture}
\caption{Converting the sequence $\gamma,\delta_1$ into 2-3 moves, at the cost of creating an extra arch.}
\label{fig:replacementExample}
\end{figure}

With this idea in mind, let $k\geqslant1$ and suppose we have constructed a suitable sequence $P_0,\ldots,P_{k-1}$.
We need to construct a new spine $P_k$ such that:
\begin{itemize}
\item $P_k$ looks just like $Q_k$ but with an extra arch; and
\item $P_{k-1}$ can be transformed into $P_k$ using 2-3 moves.
\end{itemize}
For this, we know that $Q_{k-1}$ can be turned into $Q_k$ using a single 2-3 move $\delta_k$,
and we also know that $P_{k-1}$ looks like $Q_{k-1}$ with an extra arch.
In the singular graph $S_{Q_{k-1}}$, consider the edge $e$ along which we perform $\delta_k$.
Recall from Section~\ref{subsec:arch} that $e$ corresponds to a chain of edges in $S_{P_{k-1}}$ such that:
\begin{itemize}
\item the chain starts and ends at natural vertices; and
\item every intermediate vertex in the chain is a subdividing vertex.
\end{itemize}
Each subdividing vertex corresponds to a point at which the arch obstructs the 2-3 move $\delta_k$.
Thus, following the idea described above, we first need to shift the arch out of the way using 2-3 moves;
we only require one 2-3 move for each subdividing vertex.
After removing all the subdividing vertices in this way, we can obtain the required spine $P_k$ using a 2-3 move identical to $\delta_k$.

By induction, this completes the construction of a sequence of 2-3 moves that transforms $P$ into a new spine $P_k$.
As explained at the beginning of the proof, all that remains is to turn $P_k$ into $Q$ using a unimodal sequence of 2-3 and 2-0 moves.
Since $P_k$ looks just like $Q$ but with an extra arch, we can achieve this by removing the arch according to Observation~\ref{obs:removeArch}.
\end{proof}

\thmSemiMono*

\begin{proof}
Throughout this proof, call a sequence $\varSigma$ of 2-3, 3-2 and 2-0 moves \textbf{benign} if it consists of two parts:
an arbitrary sequence of 2-3 and 3-2 moves, followed by a (possibly empty) sequence of 2-0 moves.
Thus, a unimodal sequence of 2-3 and 2-0 moves is just a benign sequence with no 3-2 moves.
With this in mind, the idea of this proof is to turn a benign sequence into a unimodal sequence by
repeatedly applying Lemma~\ref{lem:localSemiMono} to reduce the number of 3-2 moves.

To do this, let $P$ be the special spine dual to $\mathcal{T}$, and let $Q$ be the special spine dual to $\mathcal{U}$.
By Amendola's result (Theorem~\ref{thm:Amendola}), we know that $P$ is connected to $Q$ by a sequence of 2-3 and 3-2 moves.
Denote this sequence by $\varSigma_m$, where $m$ is the number of 3-2 moves in this sequence;
there is nothing to prove if $m=0$, so we may assume that $m\geqslant1$ for the rest of this proof.
Since $\varSigma_m$ is a benign sequence (with no 2-0 moves), we can use it as the starting point for our construction.

To inductively reduce the number of 3-2 moves in our benign sequence, fix any particular
$i\in\{1,\ldots,m\}$, and suppose we have constructed a benign sequence $\varSigma_i$ such that:
\begin{itemize}
\item $\varSigma_i$ transforms $P$ into $Q$; and
\item $\varSigma_i$ includes exactly $i$ 3-2 moves.
\end{itemize}
Let $\gamma$ denote the last 3-2 move in $\varSigma_i$.
Because $\varSigma_i$ is benign, $\gamma$ must be followed immediately by 2-3 moves $\delta_1,\ldots,\delta_k$, for some $k\geqslant0$;
these 2-3 moves must then be followed immediately by the sequence of 2-0 moves at the end of $\varSigma_i$.
By Lemma~\ref{lem:localSemiMono}, we can replace the sequence $\gamma,\delta_1,\ldots,\delta_k$ with a unimodal sequence of 2-3 and 2-0 moves;
this produces a benign sequence $\varSigma_{i-1}$ with $(i-1)$ 3-2 moves.

At the end of this inductive construction, we therefore obtain a benign sequence $\varSigma_0$ with no 3-2 moves.
As observed earlier, this means that $\varSigma_0$ is a unimodal sequence of 2-3 and 2-0 moves that transforms $P$ into $Q$;
by duality, this proves the theorem for the triangulations $\mathcal{T}$ and $\mathcal{U}$.
\end{proof}

To conclude this section, we briefly discuss Conjecture~\ref{conj:monotonic}.
The only difference between this conjecture and Theorem~\ref{thm:semiMono} is that we seek a unimodal sequence of
2-3 and 3-2 moves, rather than a unimodal sequence of 2-3 and 2-0 moves.
Given this similarity, it is natural to wonder whether a similar proof strategy could work to prove Conjecture~\ref{conj:monotonic}.

Going back through the proof of Theorem~\ref{thm:semiMono}, and in particular the proof of Lemma~\ref{lem:localSemiMono},
we see that the only place where we use 2-0 moves is when we remove arches according to Observation~\ref{obs:removeArch}.
Thus, one possible way to prove Conjecture~\ref{conj:monotonic} would be to establish the following:

\begin{conjecture}\label{conj:removeArchMono}
An arch can be removed using a unimodal sequence of 2-3 and 3-2 moves.
\end{conjecture}

In contrast to Observation~\ref{obs:removeArch}, which follows immediately from our construction of arches,
Conjecture~\ref{conj:removeArchMono} appears to be rather difficult to prove.

\section{Experimental results}\label{sec:practical}

As discussed in Section~\ref{subsec:introMoves}, when verifying that two triangulations are indeed homeomorphic,
in practice we often need to resort to a blind search for a sequence of Pachner moves.
Thus, one reason to be interested in unimodal sequences is to allow us to perform a more targeted search.
In this section, we compare these ``blind'' and ``targeted'' approaches by testing how efficiently they find sequences
connecting pairs of minimal triangulations of the same $3$-manifold.
Here, a \textbf{minimal triangulation} of a $3$-manifold $\mathcal{M}$ means a triangulation of $\mathcal{M}$ with the smallest possible number of tetrahedra.

Our experimental data set is the census of all minimal triangulations of
closed irreducible $3$-manifolds with $n \leq 11$ tetrahedra~\cite{Burton2011ISSAC}.
We use only manifolds with more than one minimal triangulation;
in the orientable case, we also restrict ourselves to $n \geq 3$ (to avoid special cases) and $n \leq 10$ (to keep the experiments feasible).
The resulting data set contains 2\,628 orientable and 356 non-orientable $3$-manifolds, with 17\,027 minimal triangulations in total.

For each such $3$-manifold, we attempt to connect all of its minimal triangulations with Pachner moves using three algorithms:
\begin{itemize}
\item To find unstructured sequences of Pachner moves, we use a \textbf{blind search} that attempts all possible
sequences of 2-3 and 3-2 moves that never exceed $n+h$ tetrahedra, for increasing values of $h$.
\item To find unimodal sequences of 2-3 and 3-2 moves, we use a more targeted \textbf{unimodal search} that only considers all sequences of $h$ 2-3 moves up from
each minimal triangulation, for increasing values of $h$, until these sequences meet at some common triangulation(s).
\item To find unimodal sequences of 2-3 and 2-0 moves, we use an \textbf{augmented unimodal search} that is similar to the unimodal search,
except that after each individual 2-3 move, we also attempt to connect with other increasing 2-3 paths using all possible sequences of 2-0 moves.
\end{itemize}

Each of these searches can generate enormous numbers of triangulations, leading to high time and memory costs, and so the algorithms must be implemented carefully;
here we use isomorphism signatures to avoid revisiting triangulations, and union-find to detect when sequences of moves intersect.
See~\cite{Burton2011} for details.
All code was written using \emph{Regina}~\cite{Burton2004Regina,Burton2013Regina,Regina}.

We measure the performance of each search algorithm by the total number of triangulations that
it constructs, since this directly determines both the running time and memory usage.
We removed one manifold from our data set because the unimodal search exceeded $50\,000\,000$ triangulations (a limit imposed to avoid exhausting memory).
Our key observations:
\begin{itemize}
\item Apart from the manifold that we removed, we were able to find unimodal sequences connecting all pairs of homeomorphic minimal triangulations in our data set.
This gives some experimental evidence in favour of Conjecture~\ref{conj:monotonic}.
\item In \emph{every} case, the unimodal and augmented unimodal searches both generated the same number of triangulations from
2-3 and 3-2 moves, and required the same additional number of tetrahedra $h$.
This means that in practice, the 2-0 moves are unnecessary, lending further weight to Conjecture~\ref{conj:monotonic}.
\item For $\sim88\%$ of manifolds, none of the triangulations generated in the augmented unimodal search supported 2-0 moves at all.
For the remaining minority of manifolds, the number of additional triangulations generated from 2-0 moves is consistently small, though the portion does rise slowly;
see Figure~\ref{fig:descent}, which plots these numbers on a log scale (each data point represents a single manifold).
This suggests that, though the 2-0 moves are unnecessary, they are also not costly.
\begin{figure}[htbp]
\centering
\includegraphics[scale=0.65]{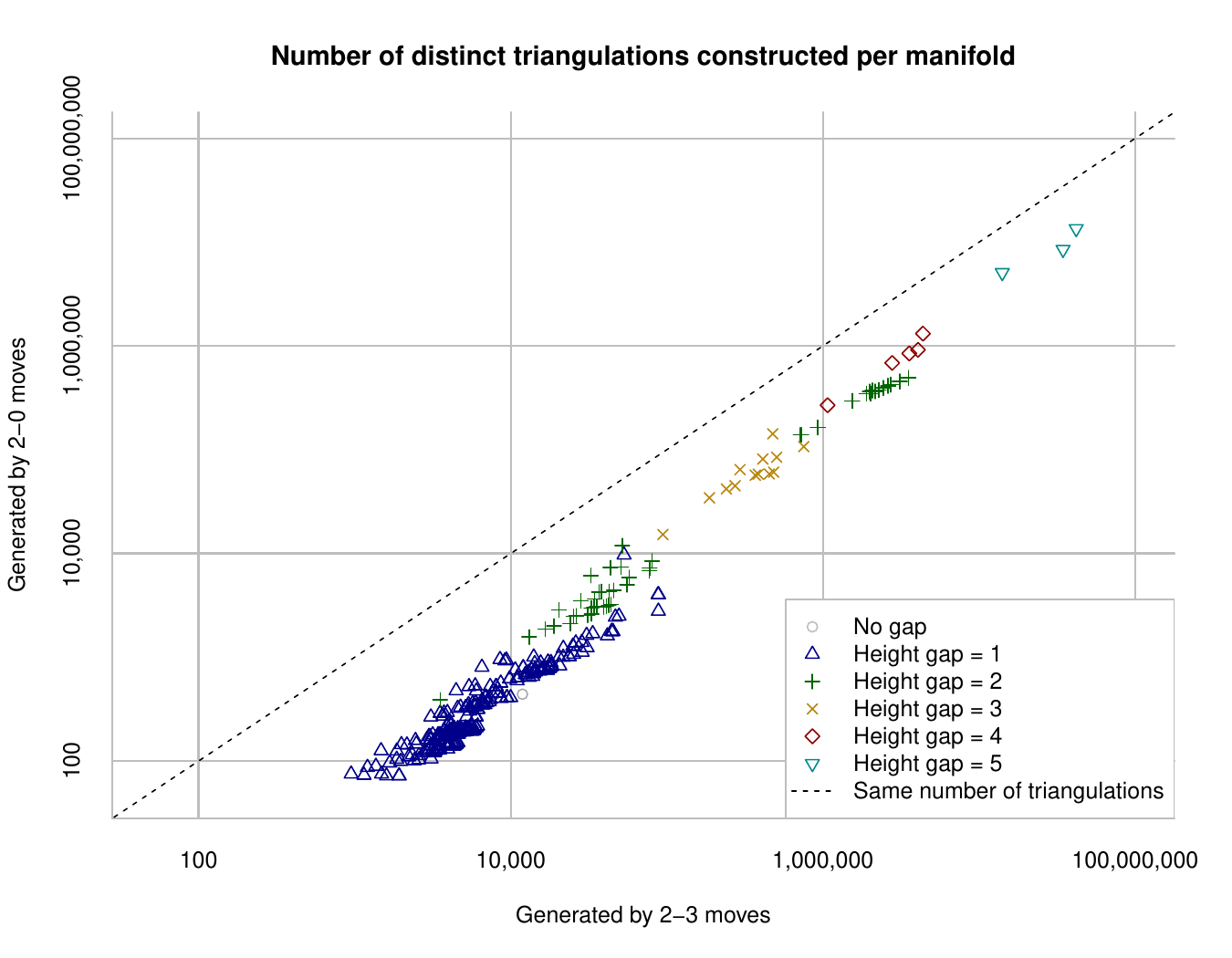}
\caption{Measuring the cost of 2-0 moves in the augmented unimodal search.}
\label{fig:descent}
\end{figure}
\item The single most important factor in the performance is the \textbf{height gap}:
the difference between the number $h$ of additional tetrahedra required in each search type.
Figure~\ref{fig:heights} compares the blind and unimodal searches on a log scale, and categorises the manifolds by this height gap;
we see that each additional tetrahedron costs roughly an additional order of magnitude.
\end{itemize}

\begin{figure}[htbp]
\centering
\includegraphics[scale=0.65]{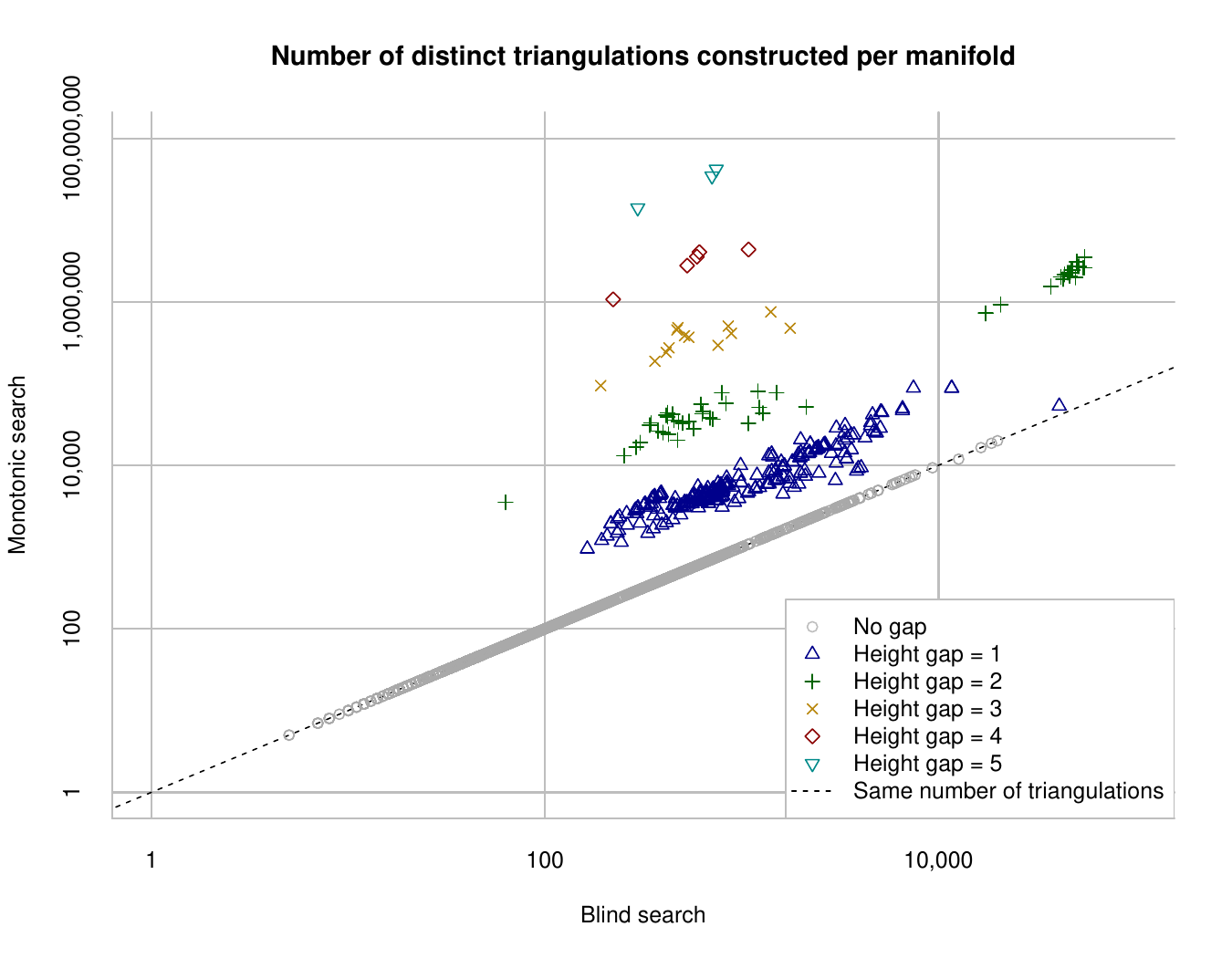}
\caption{Comparing the performance of the blind and unimodal searches.}
\label{fig:heights}
\end{figure}

From these experiments, the clear message is: when attempting to connect two triangulations in practice to prove that they are homeomorphic,
we should use a blind search, because the cost of searching through a larger set of triangulations with
$n+h$ tetrahedra is found to be significantly cheaper than the cost of extending the search to $n+h+1$ tetrahedra.

This also highlights the importance of experimental testing: the blind search could theoretically run through $e^{O(n \log n)}$ triangulations at every new height,
whereas the unimodal search only searches through $O(n^h)$ triangulations at height $h$.
Although the former cost is more expensive in theory, our experiments show that it is cheaper in practice.

This suggests that the main benefit of unimodal sequences is theoretical: when working in a proof with two triangulations of the same $3$-manifold,
instead of just assuming there is an arbitrary path of 2-3 and 3-2 moves with no particular structure,
we may strengthen this assumption to use a unimodal path of 2-3 and 2-0 moves instead.

Having said this, we can make two final observations:
\begin{itemize}
\item In most of our test cases, the height gap is small: of our 2984 manifolds, only 81 had a height gap over $2$.
Thus, the performance gap we see, though significant when it occurs, appears uncommon.
\item In \emph{all} of our test cases, the heights themselves are surprisingly small:
$\leq 3$ for every blind search, and $\leq 7$ for every unimodal search (except for the one that was terminated early).
\end{itemize}

It is also important to note that our experiments may be subject to the tyranny of small numbers:
whilst we cover hundreds of millions of intermediate triangulations, these triangulations are all relatively small.
Due to the exponential time and memory requirements, obtaining exact data such as ours quickly becomes impossible as the number of tetrahedra grows.

\bibliography{MonotonicRefs}

\end{document}